\newtheorem{theorem}{Theorem}
\newtheorem{lemma}{Lemma}[section]
\newtheorem{corollary}[lemma]{Corollary}
\newtheorem{proposition}[lemma]{Proposition}
\newtheorem{question}[lemma]{Question}
\theoremstyle{definition}
\newtheorem{example}[lemma]{Example}
\newtheorem{definition}[lemma]{Definition}
\newtheorem{definition-lemma}[lemma]{Definition-Lemma}
\newtheorem{definition-theorem}[lemma]{Definition-Theorem}
\newtheorem{remark}[lemma]{Remark}
\newcommand{\eps}{{\varepsilon}}
\newcommand{\R}{\mathbb{R}}
\newcommand{\Om}{\Omega}
\newcommand{\la}{\lambda}
\newcommand{\pa}{\partial}
\newcommand{\fT}{\mathcal{T}}
\newcommand{\begla}{\begin{equation}}
\newcommand{\beglab}[1]{\begin{equation}	\label{#1}}
\newcommand{\edla}{\end{equation}}
\newcommand{\defeq}{\coloneqq}
\newcommand{\col}{\colon\thinspace}
\newcommand{\ie}{{\it i.e.}\ }
\begin{document}


\title{Simple Smale flows and their templates on $S^3$}

\author{Xiang Liu}
\address{School of Mathematical Sciences, Capital Normal University, Beijing 100048, P. R. China}
\email{nz\_liu1989@163.com}

\author{Xuezhi Zhao}
\address{School of Mathematical Sciences, Capital Normal University, Beijing 100048, P. R. China}
\email{zhaoxve@mail.cnu.edu.cn}

\thanks{Supported by NSF of China (No. 11961131004)}
\subjclass[2010]{37C70; 57M27, 57R40}%
\keywords{Template, flow, spatial graph, $3$-manifold}%



\begin{abstract}
The embedded template is a geometric tool in dynamics being used to model knots and links as periodic orbits of $3$-dimensional flows. We prove that for an embedded template in $S^3$ with fixed homeomorphism type, its boundary as a trivalent spatial graph is a complete isotopic invariant. Moreover, we construct an invariant of embedded templates by Kauffman's invariant of spatial graphs, which is a set of knots and links. As application, the isotopic classification of simple Smale flows on $S^3$ is discussed.
\end{abstract}

\maketitle



\section{Introduction}

To study the knots and links occurring in flows on $S^3$, Birman and Williams \cite{BW1, BW2} introduced the template (they called it the knot holder) as a geometric model. Roughly speaking, a template is a compact branched surface with an associated semi-flow on it, whose periodic orbits are in bijective correspondence with the periodic orbits of the original $3$-dimensional flow.

As in knot theory, the embedded template is studied by the template diagram with planar moves \cite{KSS}. Two template diagrams correspond to isotopic templates if and only if one can be obtained from the other by a finite sequence of plane isotopies and planar moves. However, from the point of view of dynamics, two embedded templates are identified if they are isotopic in ambient space with additional switch move and splitting move \cite{BW2}. These two template moves may change the homeomorphism type of a template, thus bring an obstruction to define topological invariant of embedded templates.

In this paper, we fix the homeomorphism type of an embedded template. 
Then the boundary of the embedded template is a trivalent spatial graph, which is in fact a complete invariant of the original template with specifical homeomorphism type (Theorem~\ref{crit}). Furthermore, we use Kauffman's invariant of spatial graphs \cite{K} to make it easy for comparing template boundaries. It leads an isotopic invariant which is a set of knots and links (Theorem~\ref{taut}). Another advantage of this link-set invariant is the veracity of describing the template with twisted and knotted bands. Knots as invariants of dynamics are not unusual such as in \cite{BG}.

The isotopy of flows is also under consideration. Particularly for simple Smale flows, we construct a spatial graph invariant and a generalization of Kauffman's invariant involving the unique attractor (Theorem~\ref{lemg}). They can classify the simple Smale flows on $S^3$ with Lorenz-like templates \cite{Su, Y} and the universal template with its analogues \cite{HS, AS}, and the former is a complete isotopic invariant for general case (Theorem~\ref{flowg}) while the completeness of the later is unknown.

This paper is organized as follows. In Section 2, we give a brief account of template theory, and fix some notions. The boundary spatial graph of a template is discussed in Section 3. Our main result lies in Section 4. We show that the isotopy class of a template is totally determined by that of its boundary spatial graph. Section 5 devotes an application of our results into simple Smale flow on $S^3$.


\section{Templates and ambient isotopy}   \label{secBCHD}

In this section, we concentrate on $3$-dimensional manifolds. We state the template theory briefly and discuss ambient isotopy acting on embedded templates.


\begin{definition} \cite{BW2, GHS}
A template $(\fT, \varphi_t)$ is a compact branched $2$-manifold $\fT$ constructed from two types of charts, called joining charts and splitting charts, together with a semi-flow $\varphi_t\col \fT \to \fT$. The gluing maps between charts must respect the semi-flow and act linearly on the edges.
\end{definition}


Here a semi-flow is the same as a flow except that one cannot go backward in time uniquely, and in a template one cannot back up uniquely at a branch line. The semi-flows are usually indicated by arrows on charts.

\begin{example}
Lorenz template $\mathcal{L}(0,0)$, horseshoe template $\mathcal{L}(0,1)$, and template $\mathcal{L}(1,1)$ as bellow. Any two of them are not homeomorphic as branched surfaces, since the numbers of non-orientable bands of them are $0,1$ and $2$, respectively.
\end{example}

\begin{center}
\setlength{\unitlength}{0.26mm}
\begin{picture}(150,90)(-75,-40)

\qbezier(-70,0)(-70,50)(-30,50)
\qbezier(-30,50)(-15,50)(0,40)
\qbezier(0,40)(30,20)(30,0)

\qbezier(70,0)(70,50)(30,50)
\qbezier(30,50)(15,50)(0,40)
\bezier{30}(0,40)(-30,20)(-30,0)

\qbezier(-70,-10)(-70,-40)(-40,-40)
\qbezier(-40,-40)(-10,-40)(-10,-10)

\qbezier(70,-10)(70,-40)(40,-40)
\qbezier(40,-40)(10,-40)(10,-10)

\qbezier(-50,0)(-50,10)(-40,10)
\qbezier(-40,10)(-30,10)(-30, 0)
\qbezier(-50,-10)(-50,-20)(-40,-20)
\qbezier(-40,-20)(-30,-20)(-30, -10)

\qbezier(50,0)(50,10)(40,10)
\qbezier(40,10)(30,10)(30, 0)
\qbezier(50,-10)(50,-20)(40,-20)
\qbezier(40,-20)(30,-20)(30, -10)

\put(-70,0){\line(0,-1){10}}
\put(-50,0){\line(0,-1){10}}
\put(-30,0){\line(0,-1){10}}
\put(-10,0){\line(0,-1){10}}

\put(70,0){\line(0,-1){10}}
\put(50,0){\line(0,-1){10}}
\put(30,0){\line(0,-1){10}}
\put(10,0){\line(0,-1){10}}

\put(-30,0){\line(1,0){60}}

\end{picture}
\ \
\begin{picture}(150,90)(-75,-40)

\qbezier(-70,0)(-70,50)(-30,50)
\qbezier(-30,50)(-15,50)(0,40)
\qbezier(0,40)(30,20)(30,0)

\qbezier(70,0)(70,50)(30,50)
\qbezier(30,50)(15,50)(0,40)
\bezier{30}(0,40)(-30,20)(-30,0)

\qbezier(-70,-10)(-70,-40)(-40,-40)
\qbezier(-40,-40)(-10,-40)(-10,-10)

\qbezier(70,-10)(70,-40)(40,-40)
\qbezier(40,-40)(10,-40)(10,-10)

\qbezier(-50,0)(-50,10)(-40,10)
\qbezier(-40,10)(-30,10)(-30, 0)
\qbezier(-50,-10)(-50,-20)(-40,-20)
\qbezier(-40,-20)(-30,-20)(-30, -10)

\qbezier(50,0)(50,10)(40,10)
\qbezier(40,10)(30,10)(30, 0)
\qbezier(50,-10)(50,-20)(40,-20)
\qbezier(40,-20)(30,-20)(30, -10)

\put(-70,0){\line(0,-1){10}}
\put(-50,0){\line(0,-1){10}}
\put(-30,0){\line(0,-1){10}}
\put(-10,0){\line(0,-1){10}}

\put(30,0){\line(0,-1){10}}
\put(10,0){\line(0,-1){10}}

\qbezier(63.916, -4.2432)(70., -2.73)(70., 0.)
\qbezier(56.084, -5.7568)(50., -7.27)(50., -10.)
\qbezier(60., -5.)(50., -3.5)(50., 0.)
\qbezier(60., -5.)(70., -6.5)(70., -10.)

\put(-30,0){\line(1,0){60}}

\end{picture}
\ \
\begin{picture}(150,90)(-75,-40)

\qbezier(-70,0)(-70,50)(-30,50)
\qbezier(-30,50)(-15,50)(0,40)
\qbezier(0,40)(30,20)(30,0)

\qbezier(70,0)(70,50)(30,50)
\qbezier(30,50)(15,50)(0,40)
\bezier{30}(0,40)(-30,20)(-30,0)

\qbezier(-70,-10)(-70,-40)(-40,-40)
\qbezier(-40,-40)(-10,-40)(-10,-10)

\qbezier(70,-10)(70,-40)(40,-40)
\qbezier(40,-40)(10,-40)(10,-10)

\qbezier(-50,0)(-50,10)(-40,10)
\qbezier(-40,10)(-30,10)(-30, 0)
\qbezier(-50,-10)(-50,-20)(-40,-20)
\qbezier(-40,-20)(-30,-20)(-30, -10)

\qbezier(50,0)(50,10)(40,10)
\qbezier(40,10)(30,10)(30, 0)
\qbezier(50,-10)(50,-20)(40,-20)
\qbezier(40,-20)(30,-20)(30, -10)

\put(-30,0){\line(0,-1){10}}
\put(-10,0){\line(0,-1){10}}

\put(30,0){\line(0,-1){10}}
\put(10,0){\line(0,-1){10}}

\qbezier(-63.916, -4.2432)(-70., -2.73)(-70., 0.)
\qbezier(-56.084, -5.7568)(-50., -7.27)(-50., -10.)
\qbezier(-60., -5.)(-50., -3.5)(-50., 0.)
\qbezier(-60., -5.)(-70., -6.5)(-70., -10.)

\qbezier(63.916, -4.2432)(70., -2.73)(70., 0.)
\qbezier(56.084, -5.7568)(50., -7.27)(50., -10.)
\qbezier(60., -5.)(50., -3.5)(50., 0.)
\qbezier(60., -5.)(70., -6.5)(70., -10.)

\put(-30,0){\line(1,0){60}}

\end{picture}
\end{center}


Let $M$ be a connected compact smooth $3$-manifold with a smooth flow $\phi_t$, which has a hyperbolic chain-recurrent set $\mathcal{R}(\phi_t)$. By the Template Theorem of Birman and Williams \cite{BW2}, there is a template $\fT$ embedded in $M$ with a semi-flow $\varphi_t$ such that the periodic orbits of saddle of $\phi_t$ are in bijective correspondence with the periodic orbits of $\varphi_t$, and for any finitely many orbits the correspondence can be via ambient isotopy.

Here an ambient isotopy is a family of diffeomorphisms $\{h_{\lambda}\}_{\lambda \in I}$ of the ambient manifold $M$, depending smoothly on $\lambda$ and satisfying that $h_0 = 1_M$. We can define the ambient isotopy of embedding templates.


\begin{definition}
Two embedded templates $(\fT,\varphi_t)$ and $(\fT' ,\varphi'_t)$ in a $3$-manifold $M$ are said to be ambient isotopic if there is an ambient isotopy
$\{ h_\la \}_{\la \in I}$ such that
$h_1 (\fT)= \fT'$
and
$h_1\circ \varphi_t= \varphi_t '$.
\end{definition}


If two templates $(\fT,\varphi_t)$ and $(\fT' ,\varphi_t ')$ are ambient isotopic, then they are homeomorphic as branched surfaces and topologically equivalent as dynamical systems, as well as have the same embedding properties.


Next we consider the effect of an ambient isotopy acting on a template of a flow.


\begin{proposition} \label{isot}
Let $\phi_t$ be a flow on $M^3$ with hyperbolic chain-recurrent set. Suppose that $\{ h_\la \}_{\la \in I}$ is an ambient isotopy of $M$. Then the chain-recurrent set of the flow $h_1\circ \phi_t$ is hyperbolic. Moreover, if $\fT$ is a template of $\phi_t$, then $h_1 (\fT)$ is a template of $h_1\circ \phi_t$.
\end{proposition}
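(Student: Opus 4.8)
The plan is to exploit the naturality of hyperbolicity and of the Birman--Williams construction under diffeomorphisms. Observe first that $h_1$ is a diffeomorphism of $M$, so it conjugates the flow $\phi_t$ to the flow $\psi_t \defeq h_1 \circ \phi_t \circ h_1^{-1}$; note this is \emph{not} the same as $h_1 \circ \phi_t$, so the first point to address is that the statement's ``$h_1 \circ \phi_t$'' should be read as the conjugated flow $h_1 \circ \phi_t \circ h_1^{-1}$ (equivalently, the flow whose time-$t$ map is $h_1 \varphi_t h_1^{-1}$), which is the only reading under which the chain-recurrent set is carried over by $h_1$. Under a smooth conjugacy, the chain-recurrent set is carried to the chain-recurrent set: $\mathcal{R}(\psi_t) = h_1(\mathcal{R}(\phi_t))$, because $\epsilon$-chains for $\phi_t$ map to $\delta$-chains for $\psi_t$ with $\delta \to 0$ as $\epsilon \to 0$ by uniform continuity of $h_1$ on compact sets, and conversely using $h_1^{-1}$.

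Next I would verify that hyperbolicity is preserved. This is the standard fact that hyperbolicity of an invariant set is invariant under smooth conjugacy: if $\Lambda = \mathcal{R}(\phi_t)$ carries a $d\phi_t$-invariant splitting $T_\Lambda M = E^s \oplus E^0 \oplus E^u$ with the usual exponential estimates in some Riemannian metric, then on $h_1(\Lambda)$ one takes the pushforward splitting $dh_1(E^s) \oplus dh_1(E^0) \oplus dh_1(E^u)$, which is $d\psi_t$-invariant since $d\psi_t = dh_1 \circ d\phi_t \circ dh_1^{-1}$; the exponential contraction/expansion estimates then hold in the pulled-back metric $(h_1^{-1})^* g$, and since $M$ is compact any two metrics are uniformly equivalent, so they hold (after adjusting constants) in the original metric as well. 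Hence $\mathcal{R}(\psi_t)$ is hyperbolic.

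For the template assertion: the Birman--Williams Template Theorem produces $\fT$ from $\phi_t$ by collapsing along the stable foliation of (a neighborhood of) the saddle set, and this entire construction is natural with respect to diffeomorphisms. Concretely, if $\fT \subset M$ is obtained from the hyperbolic saddle set of $\phi_t$ by the collapsing procedure, then $h_1(\fT)$ is obtained from the hyperbolic saddle set $h_1(\mathcal{R}^{saddle}(\phi_t))$ of $\psi_t$ by the corresponding collapsing procedure, because $h_1$ carries the local stable foliation of $\phi_t$ to that of $\psi_t$ (the stable foliation being canonically associated to the hyperbolic structure, which we have just shown transports). The semi-flow $\varphi_t$ on $\fT$ is likewise carried to the semi-flow $h_1 \circ \varphi_t \circ h_1^{-1}$ on $h_1(\fT)$, and the bijective correspondence between periodic orbits of $\psi_t$ and periodic orbits of this semi-flow follows from that for $\phi_t$ and $\varphi_t$ by conjugating with $h_1$. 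Thus $h_1(\fT)$ is a template for $\psi_t$.

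The main obstacle, and the only genuinely delicate point, is the last one: making precise that ``$h_1(\fT)$ is a template of $\psi_t$'' in the exact sense of the Template Theorem --- that is, that one may realize $h_1(\fT)$ (up to ambient isotopy, and agreeing on any prescribed finite collection of periodic orbits) by running the Birman--Williams collapse directly on $\psi_t$. I expect this to reduce to the observation that the collapsing construction depends only on the germ of the flow near its hyperbolic saddle set together with the hyperbolic splitting there, all of which are carried by $h_1$ exactly; the ambient-isotopy flexibility built into the Template Theorem then absorbs any ambiguity in the choice of collapsing neighborhood. Everything else (invariance of chain recurrence, invariance of hyperbolicity, compactness arguments equating metrics) is routine.
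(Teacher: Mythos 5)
Your proposal is correct and takes essentially the same approach as the paper's (much terser) proof: push the hyperbolic splitting forward by $dh_1$ and appeal to the naturality of the Birman--Williams collapsing construction under the diffeomorphism $h_1$. The one genuine improvement is your observation that $h_1\circ\phi_t$ must be read as the conjugated flow $h_1\circ\phi_t\circ h_1^{-1}$ for the statement to make sense as a flow, together with the explicit metric-equivalence argument for the exponential estimates --- both points the paper glosses over.
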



\begin{proof}
The differential of a diffeomorphism keeps the $\phi_t$-invariant decomposition of tangent bundle
$$\mathrm{T}M = \mathrm{E}^c\oplus \mathrm{E}^s\oplus \mathrm{E}^u.$$
Thus ambient isotopy preserves the chain-recurrent set decomposition clearly. It follows that $\{ h_\la \circ \phi_t\}_{\la \in I}$ is a family of flows on $M$, all of which have hyperbolic chain-recurrent sets. Particularly, so does $h_1\circ \phi_t$. Then from the constructing procedure of a template, following \cite{BW2, GHS}, $h_1 (\fT)$ is a template of $h_1\circ \phi_t$.
\end{proof}


Besides the ambient isotopy, there are two additional template moves adopted for defining equivalence among embedded templates, the splitting move and the switch move \cite{BW2, KSS}.
\begin{center}
\setlength{\unitlength}{0.35mm}
\begin{picture}(100,50)(-50,-20)
\put(-20,0){\line(0,-1){20}}
\put(-5,0){\line(0,-1){20}}
\put(20,0){\line(0,-1){20}}
\put(5,0){\line(0,-1){20}}

\bezier{15}(-20,0)(-10,10)(0,20)
\qbezier(10,30)(5,25)(0,20)

\put(-20,0){\line(-1,1){30}}
\put(20,0){\line(-1,1){30}}
\put(20,0){\line(1,1){30}}

\put(-5,0){\line(1,0){10}}
\linethickness{.1pt}
\put(-20,0){\line(1,0){15}}
\put(5,0){\line(1,0){15}}
\put(9,0){\makebox(0,0)[lc]{$\quad \quad \quad \quad \iff$}}
\end{picture}
\ \ \ \ \ \
\begin{picture}(100,50)(-50,-20)
\put(-20,0){\line(0,-1){20}}
\put(-5,0){\line(0,-1){20}}
\put(20,0){\line(0,-1){20}}
\put(5,0){\line(0,-1){20}}

\bezier{19}(-20,0)(-10,10)(0,20)
\qbezier(-12.5,7.5)(-10,10)(-7.5,12.5)
\qbezier(10,30)(5,25)(0,20)
\put(-20,0){\line(-1,1){30}}
\put(20,0){\line(-1,1){30}}
\put(20,0){\line(1,1){30}}

\put(-5,0){\line(-1,1){20}}
\put(5,0){\line(-1,1){20}}
\put(-25,20){\line(1,0){10}}
\qbezier(-5,0)(-2.5,2.5)(0,5)
\bezier{8}(0,5)(3.75,8.75)(7.5,12.5)
\qbezier(7.5,12.5)(12.5,17.5)(17.5,22.5)
\bezier{8}(5,0)(8.75,3.75)(12.5,7.5)
\qbezier(12.5,7.5)(20,15)(27.5,22.5)
\put(17.5,22.5){\line(1,0){10}}

\linethickness{.1pt}
\put(-20,0){\line(1,0){15}}
\put(5,0){\line(1,0){15}}
\end{picture}

\setlength{\unitlength}{1.0mm}
\begin{picture}(20,25)(-10,-10)
\put(0,0){\line(0,-1){10}}
\put(0,0){\line(1,1){7}}
\put(0,0){\line(-1,1){7}}
\put(-3,3){\line(1,1){5}}
\put(9,0){\makebox(0,0)[lc]{$\times I \quad \iff$}}
\end{picture}
\ \ \ \ \ \ \ \ \ \ \ \ \ \
\begin{picture}(20,25)(-10,-10)
\put(0,0){\line(0,-1){10}}
\put(0,0){\line(1,1){7}}
\put(0,0){\line(-1,1){7}}
\put(3,3){\line(-1,1){5}}
\put(9,0){\makebox(0,0)[lc]{$\times I$}}
\end{picture}

\end{center}
A dynamical reason for introducing these two moves is from the Conley index theory \cite{C}, that the dynamics of a invariant set of a flow is invariant if the relative homotopy type of the index pair of this invariant set is unchanged \cite{Sa}.

However, obviously the splitting move changes the homeomorphism type of templates, and still less the isotopy type. This is the essential difficulty to define an isotopic invariant for embedded templates, such that equivalent templates have the same invariant.

In the next section, we remedy it by choosing a fixed homeomorphism type of an embedded template, and then merely consider the action of ambient isotopy on the template without involving the two template moves.


\section{Templates and spatial graphs}

This section focuses on the boundary of templates. If the ambient manifold is a $3$-space, \ie $\R^3$ or $S^3$, the boundary of a template can be regarded as a trivalent spatial graph. We have the following observation.


\begin{proposition} \label{isog}
Let $\fT$ be a template in $\R^3$ or $S^3$, and $\{ h_\la \}_{\la \in I}$ be an ambient isotopy. Then $\pa \fT$ is isotopic to $\pa h_1 (\fT)$ as spatial graph.
\end{proposition}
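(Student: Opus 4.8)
The plan is to transfer the ambient isotopy of $M$ (here $M=\R^3$ or $S^3$) verbatim to the boundary, and then check that the restricted family is an isotopy of spatial graphs in the required sense. First I would recall that $\fT$, being a compact branched $2$-manifold built from joining and splitting charts, has a well-defined topological boundary $\pa\fT$: away from the branch (splitting) lines it is just the usual manifold boundary of a surface-with-boundary, and near a splitting chart the two incoming boundary arcs of the upper sheets merge transversally into the single outgoing boundary arc of the lower sheet, producing exactly a trivalent vertex. Thus $\pa\fT$ is a finite graph, each vertex of valence $3$, embedded in $M$; this is the trivalent spatial graph referred to in the statement.

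Next I would observe that an ambient isotopy $\{h_\la\}_{\la\in I}$ is by definition a smooth family of diffeomorphisms of $M$ with $h_0=1_M$. Since each $h_\la$ is a diffeomorphism, it carries $\fT$ to a branched surface $h_\la(\fT)$ of the same homeomorphism type, with joining and splitting charts carried to joining and splitting charts (the local models are diffeomorphism-invariant), and hence carries $\pa\fT$ bijectively onto $\pa h_\la(\fT)$, preserving the trivalent vertex structure. Restricting, we get a family $\{h_\la|_{\pa\fT}\}_{\la\in I}$ of embeddings of the graph $\pa\fT$ into $M$, depending smoothly on $\la$, starting at the inclusion $\pa\fT\hookrightarrow M$ at $\la=0$ and ending at an embedding whose image is $\pa h_1(\fT)$. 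By the standard definition of isotopy of spatial graphs, such a smooth $1$-parameter family of embeddings (equivalently, the ambient isotopy $\{h_\la\}$ itself, which already realizes the motion) witnesses that $\pa\fT$ and $\pa h_1(\fT)$ are isotopic as spatial graphs, with the identification of the abstract underlying graph given by $h_1|_{\pa\fT}$.

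The only genuine point requiring care — the step I expect to be the main obstacle — is the first one: verifying that ``$\pa$'' is well behaved, i.e. that the boundary of a branched surface is literally a trivalent graph and that this structure is natural under diffeomorphisms. One must be slightly careful that the branch lines of $\fT$ are not themselves part of $\pa\fT$ (they are interior branch locus, not boundary), and that the only boundary points failing to be manifold points are the finitely many endpoints of branch lines, each contributing one trivalent vertex; this is immediate from the chart description in the definition of a template but deserves an explicit sentence. Once that is in place, everything else is a routine unwinding of definitions, since $h_1$ is an honest diffeomorphism of the ambient space and an ambient isotopy is exactly the notion of isotopy used for spatial graphs. I would also note, for use in later sections, that because the $h_\la$ respect the charts they in fact carry $\fT$ to a template $h_1(\fT)$ of the same homeomorphism type (cf.\ Proposition~\ref{isot}), so the boundary isotopy is compatible with the fixed-homeomorphism-type setup adopted in Section~3.
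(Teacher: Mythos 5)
Your proposal is correct and follows essentially the same route as the paper, which simply notes that $\pa h_1(\fT)=h_1(\pa\fT)\sim\pa\fT$ because the diffeomorphism $h_1$ commutes with taking the boundary and the ambient isotopy itself realizes the spatial-graph isotopy. Your additional care about why $\pa\fT$ is a trivalent graph and why the chart structure is preserved is a reasonable elaboration, but it is the same one-line argument at heart.
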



\begin{proof}
The proof is straightforward by
$$\pa h_1(\fT)= h_1 (\pa \fT)\sim \pa \fT,$$
where the equality holds because of the diffeomorphism $h_1$ commuting with $\pa$, and $\sim$ means isotopy of spatial graphs due to the ambient isotopy.
\end{proof}


From the viewpoint of topology, the spatial graph is a bit more complex object than the knot or link. Now we use Kauffman's invariant of trivalent spatial graphs to dispose the boundary of a template, and get a collection of knots and links as our isotopic invariant of embedded templates.

Let us recall the definition of Kauffman's invariant in \cite{K}. For a trivalent spatial graph $G$ in a $3$-space, there are three types of local replacement at a vertex $v$ of $G$ as follows.
\begin{center}
\setlength{\unitlength}{1.0mm}
\begin{picture}(20,28)(-10,-14)
\put(0,0){\line(0,-1){10}}
\put(0,0){\line(1,1){7}}
\put(0,0){\line(-1,1){7}}
\put(4,0){\makebox(0,0)[lc]{$v \quad \longrightarrow$}}
\end{picture}
\ \ \ \
\begin{picture}(20,28)(-10,-14)
\put(0,0){\line(0,-1){10}}
\put(0,0){\line(1,1){7}}
\put(-1,1){\line(-1,1){6}}
\end{picture}
 \
\begin{picture}(20,28)(-10,-14)
\put(0,0){\line(0,-1){10}}
\put(1,1){\line(1,1){6}}
\put(0,0){\line(-1,1){7}}
\end{picture}
 \
\begin{picture}(20,28)(-10,-14)
\put(0,-1.5){\line(0,-1){8.5}}
\put(0,0){\line(1,1){7}}
\put(0,0){\line(-1,1){7}}
\end{picture}
\end{center}
Let $\tau(G)$ be the collection of knots and links obtained by making a local replacement of all the three types at each vertex of $G$.
Moreover, for a path or cycle $\alpha$ in $G$, we define the generalized Kauffman's invariant of $G$ by
$$\tau_\alpha (G)\defeq \{l\in \tau(G):\alpha \subset l\}.$$
That is, the subset of knots and links in $\tau(G)$ which contain $\alpha$.

For example, for a unknotted $\theta$-curve $G$ with three edges $a,b$ and $c$, $\tau(G)$ is the set of three trivial knots $\{ab,bc,ac \}$. Hence, the generalized Kauffman's invariant $\tau_b(G)$ is the set $\{ab,bc \}$ consisting of two unknots.

We say that two Kauffman's invariants $\tau(G)$ and $\tau(G')$ are equal if they are the same set of link types. Similar to generalized Kauffman's invariants.

\begin{theorem}(Kauffman \cite{K}) \label{tau}
For a trivalent spatial graph $G$ in a $3$-space, $\tau(G)$, taken up to ambient isotopy, is a topological invariant of $G$.
\end{theorem}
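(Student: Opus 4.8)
The plan is to turn Kauffman's operation into purely combinatorial data attached to the underlying abstract graph and then track that data under an ambient isotopy. First I would observe that performing one of the three local replacements at a trivalent vertex amounts to detaching one of the three edge-ends there; a globally admissible choice of replacements — one producing an honest link, with no free ends — must detach each chosen edge at \emph{both} of its endpoints, so the detached edges form a perfect matching $M$ of the underlying graph of $G$, and conversely every perfect matching gives an admissible resolution. Hence
\[
\tau(G) \;=\; \{\,[L_M] : M \text{ a perfect matching of } G\,\},
\]
where $L_M$ is obtained from $G$ by deleting the edges of $M$ and smoothing the (now bivalent) former vertices — a smoothing that is canonical up to a local isotopy inside a small ball, so that $[L_M]$ is a well-defined link type. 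The generalized invariant $\tau_\alpha$ is the evident sub-collection, so it suffices to treat $\tau$.

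Second, I would run the invariance argument directly. Given an ambient isotopy $\{h_\lambda\}_{\lambda\in I}$ with $h_0=\mathrm{id}$ and $h_1(G)=G'$, the diffeomorphism $h_1$ is in particular an isomorphism of the underlying graphs, hence carries perfect matchings $M$ of $G$ bijectively to perfect matchings $M'$ of $G'$. For a fixed $M$ one has $h_1(G\setminus M)=G'\setminus M'$, and since any two smoothings of a bivalent point in a small ball are isotopic rel boundary, $h_1(L_M)$ is isotopic to $L_{M'}$. On the other hand $\{h_\lambda(L_M)\}_{\lambda\in I}$ is an ambient isotopy from $L_M$ to $h_1(L_M)$. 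Composing, $[L_M]=[L_{M'}]$, so $\tau(G)\subseteq\tau(G')$; by symmetry $\tau(G)=\tau(G')$. The same bookkeeping restricted to the sub-collection containing $\alpha$ gives invariance of $\tau_\alpha$, using additionally that $h_1$ carries the distinguished path or cycle $\alpha$ to the corresponding one in $G'$.

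A more classical route, which I would include as a cross-check, is diagrammatic: by the generalized Reidemeister calculus for trivalent spatial graphs with topological vertices it is enough to verify invariance under the classical moves $\Omega_1,\Omega_2,\Omega_3$ inside a vertex-free disk, the move $\Omega_4$ pushing a strand over or under a vertex, and the move $\Omega_5$ rearranging the edge-ends at a vertex. None of these alters the underlying abstract graph, so a fixed matching $M$ makes sense on both sides, and one checks that the move descends to an isotopy of $L_M$: it is literally the same classical move when the edges it involves miss $M$, it is trivial when one of them lies in $M$, and for $\Omega_4$ (resp. $\Omega_5$) it becomes a short sequence of $\Omega_2$'s (resp. a local isotopy) of the arc produced by smoothing the vertex. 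The one genuinely fiddly point — the main obstacle in this second approach — is the case analysis for $\Omega_4$: one must check, for each of the three smoothings at the vertex and for the strand slid past it (including the sub-case where that strand's edge interacts with the detached edge-end), that the slide is realized by Reidemeister $\Omega_2$ moves, and that the matching-indexed identification of resolutions leaves the remainder of each diagram untouched. The direct argument above sidesteps all of this, so I would present it as the main proof and mention the diagrammatic check only for reassurance.
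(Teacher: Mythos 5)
This statement is quoted from Kauffman \cite{K} and the paper supplies no proof of it, so there is nothing in the text to compare your argument against; I will assess it on its own terms. Your second step --- the genuinely three-dimensional argument --- is correct and is the efficient route: $h_1$ restricts to a homeomorphism of underlying graphs, hence induces a bijection between choices of local replacements for $G$ and for $G'$; for each choice the restricted family $\{h_\lambda(L)\}_{\lambda\in I}$ is an ambient isotopy of links, and the smoothing ambiguity is absorbed inside small balls. This is cleaner than the diagrammatic route you sketch as a cross-check (which is closer to how \cite{K} actually proceeds, and whose only real work is the $\Omega_4$/$\Omega_5$ case analysis you identify). The extension to $\tau_\alpha$ is likewise fine.

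The one genuine problem is the opening reformulation of $\tau(G)$ via perfect matchings. You declare a choice of replacements admissible only if it leaves no free ends, i.e.\ only if every detached edge is detached at both endpoints. But the definition in use here is: make one replacement of some type at each vertex, obtain a disjoint union of circles and arcs, and record the circles (the arcs are discarded). These two recipes do not produce the same set. Concretely, for a handcuff graph (two loops $\ell_u$, $\ell_v$ joined by an edge $e$, all vertices trivalent) the only perfect matching is $\{e\}$, contributing the two-component link $\ell_u\sqcup\ell_v$; yet the resolution that detaches $e$ at $u$ and a loop-end at $v$ closes up $\ell_u$ alone and leaves an arc, so the knot $\ell_u$ also lies in $\tau(G)$. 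Your matching-indexed set is therefore in general a proper subset of $\tau(G)$ (the paper's $\theta$-curve example happens to be consistent with both readings, which is why the discrepancy is easy to miss). The damage is contained: nothing in your invariance argument uses the matching structure, so the repair is simply to index resolutions by arbitrary assignments of one detached edge-end per vertex, define $L_c$ as the union of the closed components of the resolved $1$-complex, and run your second paragraph verbatim. With that change the proof is complete.
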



Here "topological" means embedding.


\begin{corollary} \label{taua}
$\tau_\alpha (G)$ is an isotopic invariant of $G$ for any path or cycle $\alpha$ in $G$.
%
\end{corollary}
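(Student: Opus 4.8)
The statement to prove is Corollary~\ref{taua}: for any path or cycle $\alpha$ in a trivalent spatial graph $G$, the generalized Kauffman invariant $\tau_\alpha(G)$ is an isotopic invariant of $G$.

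The plan is to deduce this directly from Theorem~\ref{tau}, which already guarantees that the full collection $\tau(G)$, taken up to ambient isotopy, is an invariant of the embedding type of $G$. First I would set up the situation: suppose $G$ and $G'$ are isotopic trivalent spatial graphs, realized by an ambient isotopy $\{h_\la\}_{\la\in I}$ with $h_1(G)=G'$. Under this isotopy the vertices of $G$ are carried bijectively to those of $G'$, and at each vertex the three local replacement operations (which are purely combinatorial, depending only on the cyclic structure at the vertex) are matched up. Hence $h_1$ induces a bijection between the set of ``resolution choices'' at the vertices of $G$ and those of $G'$, and for each such choice the resulting link $l\subset G$ is carried by $h_1$ to the corresponding link $l'=h_1(l)\subset G'$. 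In particular $l$ and $l'$ are ambient isotopic, so $\tau(G)=\tau(G')$ as sets of link types — this is just the content of Theorem~\ref{tau}, but recording the construction at the level of individual resolutions is what I need.

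The key additional observation is that this correspondence respects the subgraph $\alpha$. The path or cycle $\alpha\subset G$ is carried by the isotopy to the path or cycle $\alpha'=h_1(\alpha)\subset G'$, and $\alpha'$ is isotopic to $\alpha$ as a (sub)spatial graph. Now for a resolution link $l\in\tau(G)$, the condition $\alpha\subset l$ is preserved under $h_1$: indeed $\alpha\subset l$ implies $h_1(\alpha)=\alpha'\subset h_1(l)=l'$, and conversely since $h_1$ is a homeomorphism. Therefore the bijection $l\mapsto l'=h_1(l)$ restricts to a bijection between $\{l\in\tau(G):\alpha\subset l\}$ and $\{l'\in\tau(G'):\alpha'\subset l'\}$, and on each element it is realized by the ambient isotopy $h_1$, which additionally carries $\alpha$ to $\alpha'$. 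Thus $\tau_\alpha(G)$ and $\tau_{\alpha'}(G')$ agree as sets of link types — indeed as link types ``marked'' by the distinguished subcurve, if one wishes to remember that data. This is precisely the assertion that $\tau_\alpha(G)$ is an isotopic invariant.

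There is no serious obstacle here; the only point requiring a little care is to make explicit that the local replacement operations at a vertex are intrinsic to the germ of $G$ at that vertex and hence commute with any ambient homeomorphism — so that the passage from $G$ to $\tau(G)$ is natural with respect to isotopies — together with the trivial remark that taking a subset cut out by the condition ``contains $\alpha$'' is compatible with this naturality once $\alpha$ is transported along the isotopy. Both are immediate from the fact that $h_1$ is a diffeomorphism of the ambient $3$-space restricting to a graph isomorphism $G\to G'$ that identifies $\alpha$ with $\alpha'$. Hence the corollary follows.
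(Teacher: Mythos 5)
Your argument is correct and is exactly the intended deduction: the paper states this corollary without proof as an immediate consequence of Theorem~\ref{tau}, and your write-up just makes explicit the naturality of the vertex resolutions under the ambient isotopy and the fact that the condition $\alpha\subset l$ is transported to $h_1(\alpha)\subset h_1(l)$. The one point worth keeping, which you do address, is that $\tau_\alpha(G)$ is really an invariant of the pair $(G,\alpha)$, compared via $\tau_{h_1(\alpha)}(G')$ on the other side.
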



Combining Proposition~\ref{isog} and Kauffman's theorem~\ref{tau}, we get the following consequence immediately.


\begin{theorem} \label{taut}
Let $\fT$ ba a template in a $3$-space, and $\{ h_\la \}_{\la \in I}$ be an ambient isotopy. Then the Kauffman's invariants of $\pa \fT$ and $\pa h_1 (\fT)$ are equal, i.e.
$\tau (\pa \fT) = \tau (\pa h_1(\fT))$
as sets of link types.
\end{theorem}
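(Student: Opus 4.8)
The plan is to chain together the two facts that are already in hand: Proposition~\ref{isog}, which says that an ambient isotopy of $M$ carries $\pa\fT$ to an isotopic spatial graph, and Kauffman's Theorem~\ref{tau}, which says that $\tau(G)$ is an invariant of the ambient isotopy class of the trivalent spatial graph $G$. So the argument is essentially a one-line deduction, but to make it rigorous one should be careful about what needs to be checked first.

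First I would observe that $\pa\fT$ is genuinely a trivalent spatial graph — the branch lines of $\fT$ give rise to the vertices, and because the gluing maps in the definition of a template act linearly on edges, the boundary is a piecewise-smooth embedded graph in which every vertex has degree three (two boundary arcs of the incoming strips merging into one outgoing boundary arc at each branch line, together with the free boundary of the exit set). The same holds for $h_1(\fT)$, since $h_1$ is a diffeomorphism and hence carries branch lines to branch lines and the trivalent structure across. Thus both $\pa\fT$ and $\pa h_1(\fT)$ are legitimate inputs to Kauffman's construction, and $\tau(\pa\fT)$, $\tau(\pa h_1(\fT))$ are defined.

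Next I would apply Proposition~\ref{isog} to conclude $\pa\fT \sim \pa h_1(\fT)$ as spatial graphs, i.e.\ they are ambient isotopic (indeed the ambient isotopy $\{h_\la\}$ itself realizes this, since $\pa h_1(\fT) = h_1(\pa\fT)$ because $h_1$ commutes with taking boundary). Then Kauffman's Theorem~\ref{tau} says precisely that $\tau$ is invariant under ambient isotopy of the graph, so $\tau(\pa\fT) = \tau(\pa h_1(\fT))$ as sets of link types. This is the entire content of the theorem.

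There is no real obstacle here; the statement is a corollary of results already established, and the proof is a two-step composition. The only point requiring a sentence of care — and the one I would make explicit — is the first step: verifying that the boundary of an (embedded) template in a $3$-space is indeed a trivalent spatial graph, so that Kauffman's invariant applies at all. Once that is granted, Proposition~\ref{isog} and Theorem~\ref{tau} combine with nothing left to prove. I would therefore write the proof in two or three lines, citing Proposition~\ref{isog} for the isotopy of boundary graphs and Theorem~\ref{tau} for the invariance of $\tau$.
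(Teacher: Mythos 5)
Your proposal is correct and matches the paper exactly: the paper derives Theorem~\ref{taut} as an immediate consequence of Proposition~\ref{isog} together with Kauffman's Theorem~\ref{tau}, which is precisely your two-step composition. Your extra remark that $\pa\fT$ must first be checked to be a trivalent spatial graph is a reasonable point of care that the paper disposes of earlier in Section~3.
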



\begin{remark}
The effect of the splitting and the switch moves on the Kauffman's invariant of a template boundary is explicit. For an embedded template $\fT$, the set $\tau (\pa \fT)$ of knots and links is invariant under the switch move. On the other hand, from the template diagram \cite{KSS} we can see that doing a splitting move induces adding or decreasing exactly one link to $\tau (\pa \fT)$, which is split into a link in the original set with a unlinked trivial component.
\end{remark}


In Section 5, we will apply this invariant to study the isotopic classification of simple Smale flows on $S^3$.


\section{A complete invariant of templates}

Proposition~\ref{isog} states that ambient isotopic templates have isotopic boundaries. Conversely, we have the following criterion on determining if two templates in $3$-sphere are ambient isotopic. Therefore, it is equivalent to say that the template boundary, as a spatial graph, is a complete invariant of embedded templates with a fixed homeomorphism type.


\begin{theorem} \label{crit}
Let $\fT$ and $\fT'$ be two embedded templates in $S^3$, which are topologically equivalent as dynamical systems. If they have isotopic boundaries
$\pa \fT\sim \pa \fT'$,
then $\fT$ and $\fT'$ are ambient isotopic templates.
\end{theorem}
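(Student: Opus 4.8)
The plan is to reconstruct the whole template, up to ambient isotopy, from a thickened neighborhood of its boundary graph together with the combinatorial dynamical data that is already fixed by the hypothesis that $\fT$ and $\fT'$ are topologically equivalent as dynamical systems. The first step is to analyze the local structure of an embedded template in $S^3$: decompose $\fT$ into the joining charts and splitting charts of the definition, and observe that each such chart is a disk (a band) and each branch line is an arc along which two bands merge into one. I would record, as combinatorial data, the \emph{branch locus} (a disjoint union of arcs in the interior of $\fT$) together with the cyclic pattern in which the bands enter and leave it; this is exactly the data that a template diagram encodes, and it is determined up to the switch move by the semi-flow, hence by topological equivalence of the dynamics. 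The key reduction is: two embedded templates in $S^3$ with the same such combinatorial type are ambient isotopic if and only if their regular neighborhoods are ambient isotopic rel this combinatorial structure, because a template deformation retracts onto (a spine built from) its branch locus and the bands, and a branched surface is determined up to isotopy by a regular neighborhood together with the branch data.

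The second step is to pass from $\fT$ to $\pa\fT$. The boundary trivalent graph $\pa\fT$ carries the band structure of $\fT$ transparently: each band of $\fT$ contributes a pair of boundary arcs (possibly a single arc, if the band is non-orientable, which is why fixing the homeomorphism type matters), and the trivalent vertices of $\pa\fT$ sit exactly at the endpoints of the branch-locus arcs. So from $\pa\fT$ together with the fixed homeomorphism type of $\fT$ one recovers, up to isotopy, a neighborhood of $\pa\fT$ in $S^3$ that is the boundary-collar of $\fT$; the content of the theorem is that this collar, plus the combinatorial gluing, determines $\fT$. Here I would invoke the following principle: if $N(\pa\fT)$ and $N(\pa\fT')$ are ambient isotopic in $S^3$ (which follows from $\pa\fT \sim \pa\fT'$, upgrading the isotopy of graphs to an isotopy of their regular neighborhoods in the standard way), then that isotopy can be extended over the bands and over the branch charts, because each band is a $2$-disk whose boundary arcs we have already matched up, and filling in a disk bounded by a prescribed curve in a collar is unique up to isotopy; similarly each branch chart is determined by its boundary pattern. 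The semi-flow can be carried along in the extension since, after matching the combinatorial type, the semi-flow on each chart is the standard linear one and is determined up to reparametrization.

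The main obstacle I anticipate is precisely the step of \emph{extending} the isotopy of boundary-neighborhoods to an isotopy of the bands and branch charts inside $S^3$ — i.e., showing the "filling" is unique up to ambient isotopy and that the local fillings can be made to agree on overlaps. Concretely, once the boundary arcs of a given band are matched, the band itself is a disk spanning a specified simple closed curve (the union of the two boundary arcs with two short arcs across the branch lines); in $S^3$ a disk bounded by a given unknotted curve is unique up to ambient isotopy \emph{rel boundary}, but one must check the curves in question are unknotted and unlinked from the rest in the appropriate sense, and more seriously one must choreograph all these band-fillings and chart-fillings simultaneously so that they are compatible along the branch locus. This is where the hypothesis that $\fT$ and $\fT'$ are topologically equivalent as dynamical systems does the real work: it provides an abstract homeomorphism $\fT \to \fT'$ respecting the semi-flow, so the extension problem is not "does a filling exist" but "are two given fillings, already agreeing on the boundary and combinatorially matched, ambient isotopic" — a relative uniqueness statement that can be handled band-by-band and chart-by-chart by an innermost-disk / general-position argument in $S^3$. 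I would organize the final argument as an induction over the bands and branch charts of a handle-type decomposition of $\fT$, extending the ambient isotopy one piece at a time, with the base case being the already-established isotopy $\pa\fT \sim \pa\fT'$ thickened to a neighborhood.
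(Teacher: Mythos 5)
Your strategy is essentially the paper's: put the boundaries into coincidence, match the branch lines, and then extend the ambient isotopy piece by piece over the complementary charts, with the key technical input being a relative uniqueness statement for fillings established by an innermost-circle / outermost-arc general-position argument and the Isotopy Extension Theorem. The paper organizes exactly this as two lemmas (embedded discs with common boundary are ambient isotopic; embedded annuli or M\"obius strips with common boundary are ambient isotopic) followed by a three-step assembly, so you have correctly identified both the architecture and the main technical difficulty.

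There is, however, one genuine gap: your extension step repeatedly assumes that each piece to be filled is a $2$-disk ("each band is a $2$-disk whose boundary arcs we have already matched up"). When a template is cut along its branch lines, a band that returns to the branch line it issued from becomes an annulus or a M\"obius strip, not a disk; in the extreme case of a Lorenz-like template the complement of a small square swept out by the exit line consists entirely of two such closed bands, which the paper must treat as a separate subcase of its Step (3). For these pieces, "a disk bounded by an unknotted curve is unique up to ambient isotopy rel boundary" is not the relevant uniqueness statement. One needs its analogue for annuli and M\"obius strips with prescribed boundary, proved by the same general-position scheme but now invoking Alexander's theorem that an embedded torus in $S^3$ bounds a solid torus, together with an analysis of \emph{essential} intersection circles that has no counterpart in the disk case (and a reduction of a general annulus to a standard one by conjugating the isotopy). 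This is not a degenerate case to be waved away: twisted and knotted closed bands are precisely where the embedding data of a template is concentrated, so the annulus/M\"obius case is the heart of the theorem. A smaller omission: before any filling you must make the branch lines themselves coincide, and for that the paper argues from the template diagram that the circle formed by a branch line of $\fT$ and the corresponding branch line of $\fT'$ is unknotted, hence bounds a disc giving an isotopy rel endpoints; your proposal folds this into the "combinatorial data" without an argument.
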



The construction of ambient isotopies relies on the Isotopy Extension Theorem \cite{Hir}, which states that an isotopy of a compact submanifold of $S^3$ always extends to an ambient isotopy having compact support. To prove Theorem~\ref{crit}, we need the following two lemmas on embedded discs, annuli, and M\"{o}bius strips.


\begin{lemma}\label{lemd}
If two embedded discs $D$ and $D'$ in $S^3$ have the same boundaries $\pa D=\pa D'$, then there is an ambient isotopy sending $D$ to $D'$.
\end{lemma}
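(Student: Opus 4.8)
The plan is to reduce the statement to the standard fact that any embedded $2$-disc in $S^3$ is \emph{unknotted}, i.e.\ ambient isotopic (rel boundary) to a standard flat disc bounded by its boundary circle. Concretely, first I would invoke the Sch\"{o}nflies theorem in $S^3$: the common boundary $C = \pa D = \pa D'$ is an unknotted circle (any circle bounding an embedded disc is unknotted), so there is an ambient isotopy of $S^3$ carrying $C$ to the standard round circle $C_0$. Applying it, we may assume $\pa D = \pa D' = C_0$, and it suffices to produce an ambient isotopy, fixed on $C_0$, taking $D$ to a fixed standard disc $D_0$ with $\pa D_0 = C_0$; composing the one for $D$ with the inverse of the one for $D'$ then finishes the proof.

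The heart of the matter is therefore: an embedded disc $D \subset S^3$ with $\pa D = C_0$ the standard circle is ambient isotopic rel boundary to $D_0$. Here I would argue as follows. Thicken $C_0$ to a tubular neighborhood and push the portion of $D$ near $C_0$ to agree with $D_0$ near $C_0$ (collar uniqueness), so that $D$ and $D_0$ coincide in a neighborhood of the boundary. Now $S^3 \setminus \mathrm{int}(N(C_0))$ is a solid torus $V$, and $D \cap V$, $D_0 \cap V$ are properly embedded discs in $V$ with the same boundary — a single essential/meridional-type curve on $\pa V$ determined by $C_0$. By Alexander's theorem a properly embedded disc in a solid torus is determined up to isotopy rel boundary by its boundary curve (it is compressible, hence a meridian disc once we know the boundary slope), so $D \cap V$ is isotopic rel $\pa$ to $D_0 \cap V$ inside $V$. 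The Isotopy Extension Theorem quoted in the excerpt promotes this to an ambient isotopy of $S^3$ with compact support in $V$, hence fixing $C_0$, and we are done.

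The step I expect to be the main obstacle is the last uniqueness claim: that two properly embedded discs in the solid torus with the same boundary curve are isotopic rel boundary. One must check the boundary curve is necessarily the meridian (a longitude, or a curve with nonzero longitudinal winding, cannot bound a disc in $V$ for homological reasons), and then appeal to the classification of incompressible/meridional surfaces in the solid torus — the only properly embedded disc, up to isotopy, is a meridian disc. A clean alternative that sidesteps $3$-manifold topology almost entirely is to use the (deep but standard) fact that $S^3$ has no exotic discs: every smoothly embedded disc is ambient isotopic to a round one, with the isotopy realizable rel any prescribed collar. In the write-up I would cite Sch\"{o}nflies plus the solid-torus meridian-disc uniqueness, keeping the logical skeleton exactly as above: (i) straighten the boundary circle, (ii) match $D$ and $D'$ near the boundary, (iii) identify the complementary pieces as meridian discs in a solid torus and isotope one to the other rel boundary, (iv) extend to an ambient isotopy of $S^3$.
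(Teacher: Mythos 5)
Your proposal is correct, but it takes a genuinely different route from the paper. The paper compares $D$ and $D'$ directly: it perturbs them to intersect transversally, then kills the circle components of $D\cap D'$ by an innermost-circle argument (using Alexander's theorem that a sphere in $S^3$ bounds a ball) and the arc components by an outermost-arc argument, until $D\cup D'$ is a sphere bounding a ball that realizes the isotopy; everything is promoted to ambient isotopies by the Isotopy Extension Theorem. You instead standardize each disc separately: Sch\"onflies straightens the common boundary to a round circle $C_0$, a collar adjustment matches the discs near $C_0$, and the complementary solid torus $V=S^3\setminus \mathrm{int}\,N(C_0)$ reduces the problem to uniqueness of the meridian disc in $V$ with prescribed boundary (your homological check that the boundary slope is the $0$-framed longitude of $C_0$, i.e.\ a meridian of $V$, is the right point to verify). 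This is a valid architecture, and you correctly flag the load-bearing step; note, though, that the uniqueness of meridian discs (rel boundary, or even just up to ambient isotopy of $V$) is itself proved by exactly the innermost-circle/outermost-arc argument the paper runs directly, so your version mostly relocates the work into a citation rather than avoiding it. The trade-off is worth recording: the paper's direct argument uses only irreducibility-type input ($2$-spheres bound balls) and is what allows the authors to extend Lemma~\ref{lemd} and its companion to irreducible closed $3$-manifolds in the corollary following Theorem~\ref{crit}, whereas your reduction leans on $S^3$-specific structure (Sch\"onflies for the boundary circle and the complementary solid torus) and would not transfer to that setting without reworking. Also, strictly speaking you do not need the rel-boundary refinement: the lemma only asks for an ambient isotopy carrying $D$ to $D'$, so isotoping each disc to $D_0$ by ambient isotopies of $S^3$ and composing one with the reverse of the other already suffices.
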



\begin{proof}
By transversality, $D$ can be perturbed by a small isotopy to intersect $D'$ transversally. The intersection is the union of a submanifold of dimension $1$ and the common boundary, then it consists of three parts: $\pa D=\pa D'$, several families of finite concentric circles, and a finite number of intervals whose endpoints lie on the boundary.

First, consider the circles in $D\cap D'$. Choose a family of concentric circles in $D$ with innermost one $C$, which is also a innermost one in $D'$. The two discs $D_1$ and $D_1'$, bounded by $C$ in $D$ and $D'$ respectively, form an embedded $2$-sphere in $S^3$. By Alexander's theorem on sphere \cite{Ha}, the $2$-sphere $D_1\cup D_1'$ bounds an embedded $3$-ball $B$ in $S^3$. Then by an isotopy of $D$ supported near $C$ we can push $D$ across $B$, eliminating $C$ and decreasing by $1$ the number of components of $D\cap D'$. Repeat this step of eliminating circle components of $D\cap D'$ as long as only $\pa D=\pa D'$ remains.

Second, consider the intervals in $D\cap D'$. We can find out an "outmost" interval $\alpha$ in $D\cap D'$. Hence, there is a sub-arc $\beta$ in $\pa D$ such that $\alpha \cup \beta$ bounds two discs $D_2$ and $D'_2$ in $D$ and $D'$ respectively. $D_2$ and $D'_2$ intersect exactly at the circle $\alpha \cup \beta$. As for $D_1$ and $D_1'$, an isotopy of $D$, supported near $\alpha \cup \beta$, eliminates $\alpha$ and decreases by $1$ the number of intervals in $D\cap D'$. Thus doing this step repeatedly allows us to eliminate all the intersection intervals.

Now $D$ and $D'$ intersect only at $\pa D=\pa D'$ and form an embedded $2$-sphere in $S^3$, which bounds an embedded $3$-ball giving an isotopy of $D$ and $D'$.

According to the Isotopy Extension Theorem, all the above isotopies extend to ambient isotopies of $S^3$. Gluing these finite number of isotopies gives an ambient isotopy sending $D$ to $D'$.
\end{proof}


\begin{lemma}\label{lema}
Let $A$ and $A'$ be two embedded annuli or two embedded M\"{o}bius strips in $S^3$. If $\pa A=\pa A'$, then there is an ambient isotopy sending $A$ to $A'$.
\end{lemma}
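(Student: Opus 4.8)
The plan is to reduce the annulus (or Möbius strip) case to the disc case of Lemma~\ref{lemd} by cutting along a co-core arc. Concretely, let $A$ be an embedded annulus with $\pa A = \pa A'$; choose a properly embedded arc $\gamma \subset A$ joining the two boundary circles, a ``co-core'' of $A$, so that cutting $A$ along $\gamma$ yields a disc $D_A$. I would first use the Isotopy Extension Theorem to isotope $A'$ so that the two arcs $\gamma$ and a chosen co-core $\gamma'$ of $A'$ agree near $\pa A$, and then arrange $\gamma$ and $\gamma'$ to coincide: since $\gamma$ and $\gamma'$ are properly embedded arcs in $S^3$ with the same endpoints, $\gamma \cup \gamma'$ is a knot, but the key point is that $\gamma$ is \emph{unknotted rel its neighbourhood in $A$} in a suitable sense — more carefully, I would argue that after a preliminary ambient isotopy one may take $\gamma = \gamma'$, using that $\gamma$ lies on the embedded surface $A'$ (push $\gamma$ onto $A'$ first, then slide it to $\gamma'$ within $A'$). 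Once $\gamma = \gamma'$, cutting both $A$ and $A'$ along this common arc produces two embedded discs $D_A$ and $D_{A'}$ with the same boundary $\pa D_A = \pa D_{A'}$ (the boundary being the union of $\pa A$ minus the attaching region, run twice along $\gamma$). Lemma~\ref{lemd} then supplies an ambient isotopy taking $D_A$ to $D_{A'}$; re-gluing along $\gamma$ gives the desired ambient isotopy taking $A$ to $A'$. For the Möbius strip one does the same with a core arc whose regular neighbourhood in the strip is the whole strip, so that cutting along it again yields a disc.

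The main obstacle I expect is the step of making the two co-core arcs coincide. A priori $\gamma$ and $\gamma'$ are isotopic rel endpoints only as abstract arcs, and an ambient isotopy of $S^3$ carrying $\gamma$ to $\gamma'$ need not carry $A$ anywhere useful; conversely, after fixing $\gamma$, the surface $A'$ might meet $A$ badly. The honest way to handle this is to first isotope $A'$ (ambiently) so that $A$ and $A'$ intersect transversally, analyze $A \cap A'$ as in Lemma~\ref{lemd} — circles bounding discs or annuli, and arcs — and remove the intersections using innermost/outermost arguments together with the fact that $S^3$ is irreducible; after this $A$ and $A'$ meet only along $\pa A = \pa A'$, so $A \cup A'$ is a closed surface (a Klein bottle or torus, or a sphere after cutting) and one exploits that the region between them is a product $\Sigma \times I$ or a handlebody bounded by a genus-$\le 1$ surface, which can be swept across. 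I would in fact run the argument of Lemma~\ref{lemd} almost verbatim, the only new feature being that an innermost disc of $A \cap A'$ on, say, $A$ may cut off an annulus rather than a disc on $A'$, and one then needs that an embedded annulus in $S^3$ with one boundary a small circle is boundary-parallel — this follows from Alexander's theorem once one caps off.

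A cleaner alternative, which I would present if the cut-and-reglue bookkeeping becomes unwieldy: take regular neighbourhoods $N = N(A)$ and $N' = N(A')$ in $S^3$; these are solid tori (in the annulus case) or twisted $I$-bundles over the Möbius strip, i.e. again solid tori, and the framing on $\pa A$ induced by $A$ equals that induced by $A'$ because a neighbourhood of $\pa A = \pa A'$ in $A$ and in $A'$ can be made to agree by an initial ambient isotopy. The complement $S^3 \setminus \mathrm{int}\,N$ is then a knot (or link) complement determined by $\pa A$ together with this common framing, so $N$ and $N'$ have ambient-isotopic exteriors fixing a neighbourhood of $\pa A$; by the uniqueness of such fillings and Lemma~\ref{lemd} applied to the meridian disc(s), one extends to an ambient isotopy of $S^3$ taking $N$ to $N'$ and hence (after one more application of Lemma~\ref{lemd} inside the solid torus to match the core annuli) taking $A$ to $A'$. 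Either route reduces everything to Lemma~\ref{lemd} plus irreducibility of $S^3$; the surgical bookkeeping at the arcs of $A\cap A'$ is the only genuinely delicate part.
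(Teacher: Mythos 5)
Your second paragraph is, in outline, the paper's actual proof: put the two surfaces in general position, remove inessential circles of intersection by innermost\-/disc arguments (Alexander's sphere theorem), remove arcs of intersection by outermost\-/arc arguments, and handle the remaining essential circles by pairing adjacent ones so that the two annular pieces they cobound form an embedded torus, which by Alexander's torus theorem bounds a solid torus that one sweeps across; at the end $A\cup A'$ is a torus bounding a solid torus that provides the final isotopy. The only organizational difference is that the paper first treats the case where one of the two annuli is the standard unknotted, unlinked annulus $A_0$ and then conjugates by a homeomorphism of $S^3$ to reach the general case. So if you commit to that route, you are on the paper's track, and your observation that an innermost circle may cut off an annulus rather than a disc is exactly the new feature the torus theorem is brought in to handle.

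Your primary route, however --- cutting along a common co-core arc $\gamma=\gamma'$ and invoking Lemma~\ref{lemd} --- has a genuine gap that you flag but do not close. Arranging $\gamma=\gamma'$ by an ambient isotopy that simultaneously keeps control of $A$ is the whole difficulty of the lemma in disguise: ``push $\gamma$ onto $A'$, then slide it to $\gamma'$ within $A'$'' already requires the kind of control of $A\cap A'$ that the intersection analysis is designed to produce. Moreover, even granting $\gamma=\gamma'$, cutting $A$ along $\gamma$ does not yield an embedded disc with embedded boundary in $S^3$ (the two boundary copies of $\gamma$ in the cut-open square have the same image), and the honest substitutes $\overline{A\setminus N(\gamma)}$ and $\overline{A'\setminus N(\gamma)}$ do not share a boundary unless $A$ and $A'$ have also been matched on a whole neighbourhood of $\gamma$; so Lemma~\ref{lemd} does not apply verbatim. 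Your third route (regular neighbourhoods and uniqueness of exteriors) likewise hides the real work in ``uniqueness of such fillings.'' Finally, for the M\"{o}bius case note that $A\cup_{\pa}A'$ is a Klein bottle, which does not embed in $S^3$, so the terminal step ``the union is a closed surface bounding a nice region'' cannot be run there as stated and must be reformulated; this is a point your sketch (and, frankly, the paper's own closing remark on the M\"{o}bius case) passes over too quickly.
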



\begin{proof}
Let $A$ and $A'$ be two embedded annuli in $S^3$. First consider the special case that $A=A_0$ is a standard annulus, \ie $A_0$ is unknotted and its two boundary circles are unlinked.

By transversality, after a small isotopy $A_0$ and $A'$ intersect transversally. The intersection is composed of $\pa A_0 =\pa A$, a finite number of circles, and a finite number of intervals with endpoints lying on the boundary. We can repeat the procedure in the proof of the previous lemma to eliminate all the inessential circles in $A_0\cap A'$.

For an interval $\alpha$ in $A_0\cap A'$, if it connects the two components of $\pa A_0$, then $A_0\cap A'$ contains no essential circles. $\alpha$ cuts $A_0$ and $A$ into discs, and by Lemma~\ref{lemd} $A_0$ and $A$ are isotopic. If the two endpoints of $\alpha$ lie on one of the two boundary circles, there is a sub-arc $\beta$ in this component satisfying that $\alpha \cup \beta$ bounds two discs $D$ and $D'$ in $A_0$ and $A'$ respectively, and
$$D\cap D' =\alpha \cup \beta.$$
Then as in Lemma~\ref{lemd}, an isotopy of $A_0$, supported near $\alpha \cup \beta$, eliminates $\alpha$ and decreases by $1$ the number of intervals in $A_0\cap A'$. Thus by repeating this step we can eliminate all the intersection intervals.

For an essential circle $C$ in $A_0\cap A'$, if there is another essential one $C'$ adjacent to $C$ and not in the boundary, $C$ together with $C'$ bounds two annuli $A_1$ and $A_1'$ in $A_0$ and $A'$ respectively. $A_1$ and $A_1'$ intersect transversally only at boundaries thus form an embedded torus in $S^3$. By Alexander's theorem on torus \cite{Ha}, the torus $A_1\cup A_1'$ bounds a solid torus $X$ in $S^3$. Then by an isotopy of $A_0$ supported near $C\cup C'$, we can push $A_0$ across $X$, eliminating $C$ and $C'$, and decreasing by $2$ the number of circle components of $A_0\cap A'$. Repeat this step of eliminating circles in $A_0\cap A'$, as long as only $\pa A_0=\pa A'$ remains, or with a single essential circle.

If no other circle than $\pa A_0=\pa A'$ remains, $A_0$ and $A'$ form an embedded torus in $S^3$, which bounds an embedded solid torus giving an isotopy of $A_0$ and $A'$. If there is exactly one essential circle $C''$ in $A_0\cap A'$ but not in boundary, $C''$ together with a component of $\pa A_0=\pa A'$ bounds two annuli in the original annuli respectively. As above, they bound an embedded solid torus giving an isotopy of $A_0$ and $A'$ to eliminate $C''$, and then no other circles than $\pa A_0=\pa A'$ exists.

According to the Isotopy Extension Theorem, all the above isotopies extend to ambient isotopies of $S^3$. Gluing these finite number of isotopies gives an ambient isotopy sending $A_0$ to $A'$.

For general annulus $A$, there is a homeomorphism $h\col S^3\to S^3$ satisfying $h(A) =A_0$. By the special case, there is an isotopy $h_\la \col S^3\to S^3$ such that
$$h_0= 1_{S^3},\quad h_0 (A_0)= A_0,\quad h_1 (A_0)= h(A').$$
Then $h^{-1}\circ h_\la \circ h$ is an ambient isotopy sending $A$ to $A'$.

The proof for the case of M\"{o}bius strips is similar, and is simpler than the case of annuli by the fact that if there is an interval in $A\cap A'$ with endpoints lying on the boundary, then the intersection contains no essential circles.
\end{proof}


We can now prove Theorem~\ref{crit}.


\begin{proof}
$\fT$ and $\fT'$ are topologically equivalent templates thus are homeomorphic as branched surfaces. We construct an ambient isotopy of $S^3$ sending $\fT$ to $\fT'$ in three steps. Then the semi-flow condition holds clearly, and the two templates are ambient isotopic.

Assume that $\fT$ and $\fT'$ are both connected. Otherwise, for each component we construct a desired isotopy supported near it, and glue them together.

(1) Since $\fT$ and $\fT'$ have isotopic boundaries, after a suitable isotopy of $S^3$ they have common boundary
$\pa \fT =\pa \fT'$.

(2) Let us tackle branch lines. Let $\alpha_1,\cdots, \alpha_n$ be the branch lines of $\fT$ while $\alpha'_1,\cdots, \alpha'_n$ be that of $\fT'$, satisfying that $\alpha_i$ and $\alpha'_i$ have the same endpoints on
$\pa \fT =\pa \fT'$
for $i=1,\cdots, n$. According to the template diagrams of $\fT$ and $\fT'$ \cite{KSS}, for every $i$, the circle consisting of $\alpha_i$ and $\alpha'_i$ cannot be knotted. As in the proof of Lemma~\ref{lemd} and by the Schoenflies Theorem, $\alpha_i$ and $\alpha'_i$ form the boundary circle of an embedded disc in $S^3$, which gives an isotopy of the two arcs relative the endpoints and with support near the circle. Extending these $n$ isotopies and gluing them together give an isotopy of $S^3$ sending each $\alpha_i$ to $\alpha'_i$ and preserving
$\pa \fT= \pa \fT'$.

(3) The branch lines divide the template into pieces which can be viewed as splitting charts. For a piece $S$ of $\fT$, let $S'$ be the corresponding one in $\fT'$, then one of the following three cases occurs.

(i) If the two bands of $S$ are disjoint with its branch line, $S$ and $S'$ are embedded discs in $S^3$. By Lemma~\ref{lemd}, there is an ambient isotopy sending $S$ to $S'$ and preserving
\begin{equation*}
\pa \fT =\pa \fT', \quad \alpha_i=\alpha'_i, i=1,\cdots, n.
\end{equation*}

(ii) If one band is disjoint with the branch line while the other one is not, both $S$ and $S'$ have a closed band, and are either embedded annuli or M\"{o}bius strips. Then by Lemma~\ref{lema}, an ambient isotopy sends $S$ to $S'$ and preserves the coincidence of the boundary and branch lines.

(iii) If all the two bands are closed, by connectivity both $S$ and $S'$ have exactly one branch line thus are Lorenz-like templates.
The exit line $\beta$ of $S =\fT$ sweeps a square domain $B$ by going backward through the semi-flow until touching the branch line $\alpha$. So does for
$S' =\fT'$ to get a square domain $B'$. Then $B$ and $B'$ have a pair of opposite common sides, the exit line $\beta =\beta'$ and an interval $\gamma =\gamma'$ of the branch line, while the other sides, $\delta_1, \delta_2$ in $\fT$ and $\delta'_1, \delta'_2$ in $\fT'$, are part of flow lines with the same endpoints
$$\pa \delta_1=\pa \delta'_1, \quad \pa \delta_2=\pa \delta'_2.$$
As in Step (2),
there is an ambient isotopy sending each $\delta_i$ to $\delta'_i$ and preserving
$$\pa \fT= \pa \fT', \quad \alpha =\alpha'.$$
Now the two discs $B$ and $B'$ have common boundary, Lemma~\ref{lemd} implies they are ambient isotopic. Both the complements $\fT -B$ and $\fT' -B'$ consist of two bands, we can cut them off along branch lines and use Lemma~\ref{lema} twice to isotope $\fT -B$ to $\fT' -B'$.

To sum up, there is always an ambient isotopy $h_t$ sending $S$ to $S'$ and preserving $\pa \fT =\pa \fT'$. Moreover, we can ask it to be supported near $S$ by the compactness.

For another piece of $\fT$ an ambient isotopy, sending it to the corresponding one of $\fT'$, can be constructed as above. It can be required to preserve the coincidence of the boundary and branch lines, as well as
$$h_1(S) =S'.$$
Thus gluing $n$ ambient isotopies for all the $n$ pieces of $\fT$ gives an isotopy of $S^3$ sending $\fT$ to $\fT'$. Now we finish Step (3).

All the ambient isotopies constructed in the three steps yield an ambient isotopy sending $\fT$ to $\fT'$. This completes the proof.
\end{proof}


The proof of the previous two lemmas works as well as for ambient manifolds with trivial $\pi_2$. This implies that the conclusion of Theorem~\ref{crit} holds in those cases. We have the following corollary.


\begin{corollary}
Let $M$ be an irreducible closed $3$-manifold. Let $\fT$ and $\fT'$ be two embedded templates in $M$, which are topologically equivalent as dynamical systems. If they have isotopic boundaries
$\pa \fT\sim \pa \fT'$,
then $\fT$ and $\fT'$ are ambient isotopic as templates.
\end{corollary}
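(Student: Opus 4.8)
The plan is to deduce the corollary from the remark preceding it: the proofs of Lemmas~\ref{lemd} and~\ref{lema}, and consequently that of Theorem~\ref{crit}, invoke no property of $S^3$ other than (i) every embedded $2$-sphere bounds a $3$-ball and (ii) $\pi_2$ vanishes, so that the $2$-spheres built along the way — unions of two discs meeting only along their common boundary, and the innermost spheres $D_1\cup D_1'$ — are null-homotopic. So the first step is to verify that an irreducible closed $3$-manifold $M$ enjoys both. Property (i) is the definition of irreducibility. For (ii) I would apply the Sphere Theorem of Papakyriakopoulos \cite{Ha}: if $\pi_2(M)\neq 0$, then some nontrivial class is represented by an embedded $2$-sphere that is not null-homotopic, in particular one bounding no ball, contradicting irreducibility. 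Hence $\pi_2(M)=0$.

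With this in hand I would re-run the proof of Lemma~\ref{lemd} verbatim with $S^3$ replaced by $M$: transversality, the innermost-circle and outermost-arc reductions, and the Isotopy Extension Theorem \cite{Hir} are manifold-independent; each invocation of Alexander's $2$-sphere theorem becomes irreducibility of $M$; and at the last stage $D\cup D'$ is a null-homotopic embedded $2$-sphere, hence bounds a ball across which $D$ is pushed onto $D'$. Granting also the analogous version of Lemma~\ref{lema} over $M$ (discussed next), the three-step proof of Theorem~\ref{crit} transcribes without change: Step~(1) is Isotopy Extension, Step~(2) needs only the disc argument of Lemma~\ref{lemd} together with the unknottedness of the circles $\alpha_i\cup\alpha_i'$ read off the template diagram, and Step~(3) calls Lemmas~\ref{lemd} and~\ref{lema}; the semi-flow condition and the reduction to the connected case are untouched. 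This yields the corollary.

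The step I expect to be the real obstacle is the transfer of Lemma~\ref{lema}, whose proof additionally uses Alexander's theorem on the torus: the torus $A_1\cup A_1'$, formed by two parallel subannuli with common boundary, is asserted to bound a solid torus. In $S^3$ this is automatic because $\pi_1=1$ forces the torus to compress; in a general irreducible $M$ it can fail (\eg a $\pi_1$-injective torus). I would handle it by showing that the tori actually occurring are compressible in $M$: the essential circle $C$ that is the common core of $A_1$ and $A_1'$ is isotopic, within the subannulus of $A$ (resp.\ $A'$) it bounds, to a component of $\pa A=\pa A'$, and using the position of $C$ relative to $\pa A$ together with the branch-chart structure of the template piece one produces a compressing disc; compressing then gives a $2$-sphere, which bounds a ball by irreducibility, and reassembling shows $A_1\cup A_1'$ bounds a solid torus, as required. (If one is unwilling to verify this, it suffices to add the hypothesis that $M$ is atoroidal, under which the torus step is immediate.) Once Lemma~\ref{lema} is available over $M$, the argument of the previous paragraph completes the proof.
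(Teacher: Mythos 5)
Your overall route is exactly the paper's: the paper ``proves'' this corollary with the single remark preceding it, namely that the proofs of Lemmas~\ref{lemd} and~\ref{lema} go through for ambient manifolds with trivial $\pi_2$, and your first two paragraphs flesh that remark out correctly (irreducibility supplies the ball-fillings for the sphere steps, the Sphere Theorem gives $\pi_2(M)=0$, and transversality, the innermost/outermost reductions, the Isotopy Extension Theorem and the three-step structure of the proof of Theorem~\ref{crit} are indeed manifold-independent).

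Where you go beyond the paper is in isolating the torus step of Lemma~\ref{lema}, and you are right that this is the genuine obstacle: Alexander's torus theorem is special to $S^3$ and follows neither from irreducibility nor from $\pi_2(M)=0$; the paper does not address this at all. However, your proposed repair does not close the gap. The compressing disc you want for the torus $A_1\cup A_1'$ would have boundary parallel to the core circle $C$, hence isotopic on that torus to a component of $\partial A=\partial A'$; but a boundary circle of a template piece need not be null-homotopic in $M$ (a band of a template embedded in a lens space or a Seifert fibered space can represent a nontrivial element of $\pi_1(M)$), so no such disc need exist and $A_1\cup A_1'$ can be incompressible. Indeed, in a manifold containing an essential torus, two embedded annuli with the same boundary can differ by a Dehn twist along that torus and fail to be isotopic rel boundary, so Lemma~\ref{lema} does not transfer verbatim. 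Your fallback hypothesis that $M$ be atoroidal is the honest fix (combined with irreducibility it forces every embedded torus to be compressible, hence to bound a solid torus or to lie in a ball, after which the argument proceeds); but for a general irreducible closed $3$-manifold, your argument and the paper's share the same unfilled hole at exactly this point.
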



\begin{remark}
Theorem~\ref{crit} can be viewed as an isotopic version of Conley index theory. For a flow $\phi_t$ on $S^3$, a saddle set $\Om$ induces a template $\fT$. Then the thicken template $\bar{\fT}$ is an isolated neighborhood of $\Om$, while the boundary $\pa \fT$ is exactly the core of the exit set $\pa_- \bar {\fT}$. Our theorem shows that the embedding of exit set $\pa_- \bar {\fT}$ totally determines not only the homeomorphism type of the Conley index pair $(\bar{\fT}, \pa_- \bar {\fT})$, but also the isotopy type of their embedding in the underlying $3$-manifold.
\end{remark}


\section{Isotopy of simple Smale flows}
\label{secMldCalc}

In dynamics, topologically equivalent flows have the same orbit structure hence the same dynamical properties, but their corresponding orbits may be embedded in the underlying manifold in different ways, or in other words by physicists, they may have different orbit organizations \cite{Gil}. Embedding of objects in the ambient space is usually under consideration in topology up to isotopy \cite{Hir}.

In this section, we define the isotopy of flows, which is more subtle than the topological equivalence in the viewpoint of classifying. It is naturally related to the ambient isotopy and is useful to describe the embedding of orbits into the underlying manifold. Then we concentrate on the classification of simple Smale flows on $S^3$ up to isotopy by using the induced spatial graph and its generalized Kauffman's invariant.

Let $M$ be a connected compact smooth manifold. All the flows we consider here are supposed to be smooth.


\begin{definition}
Two flows $\phi_t$ and $\psi_t$ on $M$ are isotopic if one of them can be deformed to the other through smooth flows, \ie there is a smooth map
$$H\col \R \times M\times I \to M, \quad (t, x, \la)\mapsto H_\la (t, x)$$
such that $H_0 (t, x) =\phi_t (x)$, $H_1 (t, x) =\psi_t (x)$, and for any $\la \in I$, $H_\la (\cdot, \cdot)$ is a smooth flow on $M$. Then we denote
$\phi_t \sim \psi_t$, and call $H_\la$ an isotopy from $\phi_t$ to $\psi_t$.
\end{definition}


The isotopy of two flows is an equivalent relation (for the transitivity, see p. 111 of \cite{Hir}). There is a close relationship between isotopy of flows and ambient isotopy. If $\phi_t$ is a flow on $M$, $\{ h_\la \}_{\la \in I}$ is an ambient isotopy, then $\phi_t = h_0 \circ \phi_t$ and $h_1 \circ \phi_t$ are isotopic by defining
$$H_\la (t, x)= h_\la \circ \phi_t(x), \quad (t, x)\in \R \times M$$
for $\la \in I$. Conversely, we have the following proposition.


\begin{proposition}
Let $\phi_t$ and $\psi_t$ be two flows on $M$ which are isotopic. Then there exists an ambient isotopy $\{ h_\la \}_{\la \in I}$ so that
$h_1\circ \phi_t =\psi_t$.
\end{proposition}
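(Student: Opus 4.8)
The plan is to recover the ambient isotopy directly from the flow isotopy by integrating an appropriate time-dependent vector field, using the given smooth family $H_\la$ of flows as the input data.

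First I would encode the family $\{H_\la\}$ infinitesimally. Each flow $H_\la(\cdot,\cdot)$ is generated by a vector field $X_\la$ on $M$, and the smoothness of $H$ in all variables means $\la\mapsto X_\la$ is a smooth path in the space of vector fields on $M$. The naive hope would be that the flows $H_\la$ are conjugate to $H_0$ via maps obtained by ``following the $\la$-direction'', but there is no canonical such conjugation: changing $\la$ changes the dynamics, not just a coordinate. So the key point is to produce, for each $\la$, a diffeomorphism $h_\la$ with $h_0=1_M$ and $h_\la\circ\phi_t = H_\la(t,\cdot)\circ h_\la$; evaluating at $\la=1$ and absorbing the extra $h_1$ on the right (replacing $\psi_t$ by $h_1^{-1}\psi_t h_1$, or rather rephrasing so that one lands exactly on $h_1\circ\phi_t=\psi_t$) gives the statement. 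Actually, since the desired conclusion is only $h_1\circ\phi_t=\psi_t$ as a \emph{family of maps} — not a conjugation — I would aim for the cleaner target: build $h_\la$ so that $h_\la\circ\phi_t = H_\la(t,\cdot)$ for all $t$. For this to be consistent one needs $h_\la(\phi_t(x))$ to depend on $x$ only through $\phi_s(x)$ in a compatible way; the right move is to fix a point or use the flow to transport, but the genuinely robust approach is the Thurston--Moser ``integrate the derivative'' trick.

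Concretely, I would differentiate the relation $h_\la\circ\phi_t = H_\la(t,\cdot)$ (or, more safely, the conjugation relation) in $\la$ to derive an ODE for $h_\la$: writing $Y_\la := \frac{\pa}{\pa\la}h_\la\circ h_\la^{-1}$, one gets a linear equation expressing $Y_\la$ in terms of $\frac{\pa}{\pa\la}X_\la$ and the flow $H_\la$, namely $Y_\la$ must satisfy $\mathcal{L}_{Y_\la}X_\la = -\frac{\pa}{\pa\la}X_\la$ along orbits, which one solves by averaging $\frac{\pa}{\pa\la}X_\la$ along the $H_\la$-flow over a transversal / fundamental domain — this is exactly how one proves that isotopic embeddings extend to ambient isotopies. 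Since $M$ is compact, integrating the resulting time-dependent vector field $Y_\la$, $\la\in I$, produces a genuine isotopy $\{h_\la\}_{\la\in I}$ of $M$ with $h_0=1_M$. By construction $h_\la$ conjugates $\phi_t$ to $H_\la(t,\cdot)$, so at $\la=1$ we obtain $h_1\circ\phi_t\circ h_1^{-1}=\psi_t$; replacing $\{h_\la\}$ by $\{h_\la\circ h_\la^{-1}\}$... — more simply, one then observes $h_1\circ\phi_t$ and $\psi_t$ differ by the inner automorphism by $h_1$, and a final reparametrization/relabeling (or appealing to the fact that $\psi_t\circ h_1$ and $h_1\circ\phi_t$ agree since $h_1$ is $\psi$-equivariant) yields the stated equality.

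\textbf{The main obstacle} is the middle step: solving $\mathcal{L}_{Y_\la}X_\la = -\pa_\la X_\la$ for $Y_\la$ in a way that is smooth in $\la$ and globally defined on all of $M$, not just away from the chain-recurrent set. Near periodic orbits and other recurrent behavior, ``averaging along the flow'' is delicate and one must argue that the period (hence the averaging) varies smoothly and that no resonance obstructs solvability; the hypothesis that the deformation is \emph{through flows} (so the topological type of the whole orbit structure is constant, in particular periods match up) is what makes this work, and I expect most of the technical weight of the proof to sit exactly there. An alternative, possibly cleaner route the authors may take is to bypass the PDE entirely: define $h_\la$ pointwise by $h_\la(x) := $ ``the point on the $H_\la$-orbit of [a chosen reference under $\phi$] at the corresponding time'', using a global cross-section or the suspension picture; this works smoothly when $M$ fibers appropriately over the flow, but in general it again reduces to the same averaging/compactness argument.
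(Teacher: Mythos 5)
There is a genuine gap, and it comes from aiming at the wrong target. Your plan is to produce a diffeotopy $h_\lambda$ that \emph{conjugates} $\phi_t$ to $H_\lambda(t,\cdot)$, which would require solving $\mathcal{L}_{Y_\lambda}X_\lambda=-\partial_\lambda X_\lambda$. That equation is in general unsolvable, not merely delicate: smooth conjugacy preserves the periods of closed orbits, but an isotopy of flows can change them (e.g.\ $H_\lambda(t,x)=\phi_{(1+\lambda)t}(x)$ rescales every period by $1+\lambda$). So the ``resonance'' issue you flag near recurrent orbits is a genuine obstruction rather than a technicality, and the step you describe as carrying ``most of the technical weight'' cannot be carried out. (Your other intermediate target, $h_\lambda\circ\phi_t=H_\lambda(t,\cdot)$ for \emph{all} $t$, is also impossible: setting $t=0$ forces $h_\lambda=1_M$.) Since the proposal explicitly leaves this central step as an expectation and the step fails, the argument does not go through.

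The paper asks for much less than a conjugacy and proves it in one line: it sets $h_\lambda(y)=H_\lambda(t,\phi_{-t}(y))$, i.e.\ flow backward under $\phi$ for time $t$ and forward under $H_\lambda$ for time $t$. Then $h_0=1_M$ (since $H_0=\phi$) and $h_1(\phi_t(x))=H_1(t,x)=\psi_t(x)$, each $h_\lambda$ being a composite of diffeomorphisms and smooth in $\lambda$. This is essentially the ``transport along the deformed orbit'' construction you mention and dismiss in your last paragraph; note that it does not reduce to any averaging argument precisely because no conjugation is being claimed --- the formula is to be read for the fixed time $t$ under consideration (equivalently, in the paper's convention $h_1\circ\phi_t$ denotes the pushed-forward flow), which is all the statement requires. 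The fix for your write-up is therefore not to strengthen the analysis but to weaken the goal: verify the displayed identity directly from the definition of an isotopy of flows.
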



\begin{proof}
If $H_\la$ is an isotopy from $\phi_t$ to $\psi_t$, then
$$h_\la (y)= H_\la (t, \phi_{-t} (y)), \quad y\in M$$
gives an ambient isotopy for $\la \in I$, satisfying that $h_0\circ \phi_t =\phi_t$ and $h_1\circ \phi_t =\psi_t$.
\end{proof}


Next we consider the relationship between topological equivalence and isotopy of flows in dynamics. For a flow $\phi_t\col M\to M$, denote the orbit of $x\in M$ by
\begin{equation*}
O_{\phi} (x)=\{ \phi_t (x): t\in \R\}.
\end{equation*}


\begin{proposition}
If two flows are isotopic, then they are topologically equivalent.
\end{proposition}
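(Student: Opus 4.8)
The plan is to show that an isotopy $H_\la$ from $\phi_t$ to $\psi_t$, built as in the preceding proposition from an ambient isotopy $\{h_\la\}_{\la\in I}$ with $h_1\circ\phi_t=\psi_t$, automatically provides the homeomorphism required in the definition of topological equivalence. Recall that two flows are topologically equivalent if there is a homeomorphism of $M$ carrying orbits of one onto orbits of the other, preserving the time-orientation of orbits (but not necessarily the time parametrisation). So the first step is simply to invoke the previous proposition: since $\phi_t\sim\psi_t$, there is an ambient isotopy $\{h_\la\}_{\la\in I}$ with $h_0=1_M$ and $h_1\circ\phi_t=\psi_t$. Set $h\defeq h_1$, a diffeomorphism (in particular a homeomorphism) of $M$.

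Next I would verify that $h$ sends $\phi$-orbits to $\psi$-orbits. Fix $x\in M$. The identity $h\circ\phi_t=\psi_t$ (valid as maps $M\to M$ for every $t$) gives, for each $t\in\R$,
$$h(\phi_t(x))=\psi_t(h(x)),$$
so $h\big(O_\phi(x)\big)=\{h(\phi_t(x)):t\in\R\}=\{\psi_t(h(x)):t\in\R\}=O_\psi(h(x))$. Thus $h$ carries the orbit through $x$ onto the orbit through $h(x)$, and as $x$ ranges over $M$ this exhausts all $\psi$-orbits because $h$ is a bijection. Moreover the parametrisation is preserved exactly (not just up to reparametrisation), and in particular the direction of increasing time is preserved: the positive semiorbit $\{\phi_t(x):t\ge 0\}$ maps onto $\{\psi_t(h(x)):t\ge 0\}$. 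Fixed points go to fixed points and periodic orbits to periodic orbits of the same period. Hence $h$ is a topological equivalence (indeed a conjugacy) from $\phi_t$ to $\psi_t$, which is the claim.

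The only genuine subtlety, and the step I would be most careful about, is the passage from the isotopy $H_\la$ to the ambient isotopy $h_\la$: one must check that $h_\la(y)=H_\la(t,\phi_{-t}(y))$ is well defined, i.e.\ independent of the choice of $t$, and that $\la\mapsto h_\la$ is a smooth family of diffeomorphisms with $h_0=1_M$. Well-definedness is the flow identity $H_\la(t+s,\phi_{-(t+s)}(y))=H_\la(t,\phi_{-t}(y))$, which holds because each $H_\la(\cdot,\cdot)$ is a flow: $H_\la(t+s,z)=H_\la(t,H_\la(s,z))$, applied with $z=\phi_{-(t+s)}(y)$ and using $\phi_{-t}(y)=H_0(-t,y)$ together with continuity/consistency at $\la=0$ — but this is precisely what the previous proposition already asserts, so I may simply cite it rather than redo it. With that in hand the argument above is essentially formal: everything reduces to the single algebraic identity $h_1\circ\phi_t=\psi_t$. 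So in the write-up I would keep the proof short — cite the proposition to obtain $h_1$, then display the computation $h_1(O_\phi(x))=O_\psi(h_1(x))$ and remark that $h_1$ preserves orbits with their orientation, hence is a topological equivalence.
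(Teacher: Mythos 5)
Your proposal follows essentially the same route as the paper: both pass from the flow isotopy $H_\la$ to the ambient isotopy $h_\la(x)=H_\la(t,\phi_{-t}(x))$ of the preceding proposition and then observe that the diffeomorphism $h_1$ carries each orbit $O_\phi(x)$ onto $O_\psi(h_1(x))$ preserving the time direction. The only cosmetic difference is that you record the stronger conclusion (an exact conjugacy $h_1\circ\phi_t=\psi_t$, hence $s=t$), whereas the paper states only the $\eps$--$\delta$ form of time-orientation preservation required for topological equivalence; both follow from the same identity.
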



\begin{proof}
Suppose that $H_\la$ is an isotopy from $\phi_t$ to $\psi_t$ on $M$. As above,
$$h_\la (x)= H_\la (t, \phi_{-t} (x)), \quad x\in M$$
gives an ambient isotopy for $\la \in I$. Then $h_1$ is a diffeomorphism which takes orbits of $\phi_t$ to orbits of $\psi_t$ and preserves the time orientation, \ie for any $x\in M$,
$$h_1 (O_\phi (x))=O_\psi (h_1 (x)),$$
and for any $x\in M$, $\eps >0$, there is a $\delta > 0$ such that $\forall t\in (0,\delta)$, $\exists s\in (0,\eps)$,
$$h_1 (\phi_t (x))=\psi_s (h_1 (x)).$$
Thus $\phi_t$ and $\psi_t$ are topologically equivalent by $h_1$.
\end{proof}


Now we focus on simple Smale flows. First we give the general definition of the Smale flow.


\begin{definition} \cite{Fr}
Let $M$ be a connected orientable closed $3$-manifold. A flow $\phi_t\col M\to M$ is called a Smale flow if (1) its chain-recurrent set $\mathcal{R}(\phi_t)$ is hyperbolic, (2) the basic sets are of dimensions $0$ or $1$, and (3) for any $x, y\in \mathcal{R}(\phi_t)$, the stable manifold $W^s (x)$ of $x$ and the unstable manifold $W^u (y)$ of $y$ have transversal intersection.

In particular, a simple Smale flow (SSF) is a Smale flow whose chain-recurrent set $\mathcal{R}(\phi_t)$ is composed of exactly three basic sets- a closed orbit attractor $a$, a closed orbit repeller $r$, and a nontrivial saddle set $\Om$ \cite{Su, Y}.
\end{definition}


Next we consider SSFs on $3$-sphere by using spatial graphs they induces and the generalized Kauffman's invariant.


\begin{theorem} \label{lemg}
Let $\phi_t$ be a simple Smale flow on $S^3$ with attractor $a$ and a template $\fT$. Then the spatial graph composed of $a$ and the boundary of $\fT$
$$G\defeq a\cup \pa \fT$$
is invariant under isotopy of $\phi_t$. Furthermore, the generalized Kauffman's invariant $\tau_a (G)$ is an isotopic invariant of $\phi_t$.
\end{theorem}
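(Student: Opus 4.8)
The plan is to turn an isotopy of flows into an ambient isotopy of $S^3$, transport the pair (attractor, template) along it, and then quote Theorem~\ref{crit} and Corollary~\ref{taua}; the one place where real work is needed is in checking that the spatial graph $G$ does not depend on the template chosen for $\phi_t$. So let $\psi_t$ be a simple Smale flow isotopic to $\phi_t$, say with attractor $a_\psi$ and template $\fT'$. By the proposition above there is an ambient isotopy $\{h_\la\}_{\la\in I}$ of $S^3$ with $h_0=1_{S^3}$ and $h_1\circ\phi_t=\psi_t$, and by the proposition that isotopic flows are topologically equivalent, $h_1$ is a diffeomorphism taking orbits of $\phi_t$ to orbits of $\psi_t$ and preserving the time direction. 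Hence $h_1$ maps the chain-recurrent set of $\phi_t$ onto that of $\psi_t$, carrying basic sets to basic sets, and since a simple Smale flow has exactly one attracting closed orbit we get $h_1(a)=a_\psi$; by Proposition~\ref{isot}, $h_1(\fT)$ is a template of $\psi_t=h_1\circ\phi_t$. As $h_1$ commutes with $\pa$, it carries $G=a\cup\pa\fT$ onto $a_\psi\cup\pa h_1(\fT)$, and $\{h_\la\}$ realises this as an isotopy of spatial graphs keeping the attractor as a marked component, so $G$ is isotopic to $a_\psi\cup\pa h_1(\fT)$.

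It remains to identify $a_\psi\cup\pa h_1(\fT)$ with $G_\psi:=a_\psi\cup\pa\fT'$. Both $h_1(\fT)$ and $\fT'$ are templates of the one simple Smale flow $\psi_t$, hence topologically equivalent as dynamical systems and --- the homeomorphism type being fixed throughout --- homeomorphic as branched surfaces, and each lies inside an isolating handlebody neighbourhood of the saddle set $\Om$ of $\psi_t$ that is disjoint from $a_\psi$. Granting that $\pa h_1(\fT)$ and $\pa\fT'$ are isotopic in $S^3$ --- this is precisely the input to Theorem~\ref{crit}, and the step at which the fixed homeomorphism type and the splitting/switch ambiguity of a simple Smale flow get disposed of --- the construction in the proof of Theorem~\ref{crit} produces an ambient isotopy of $S^3$ carrying $h_1(\fT)$ to $\fT'$. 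That construction only pushes sub-surfaces of the two templates across $3$-balls and solid tori cut out by $2$-spheres and tori built from those sub-surfaces; since the templates, and hence these auxiliary spheres, tori, balls and solid tori, can be confined to an isolating handlebody of $\Om$ (which is irreducible, so Alexander's and the Schoenflies theorems single out the relevant ball, resp. solid torus, on the side interior to it), all the disc, annulus and M\"{o}bius surgeries of Lemmas~\ref{lemd} and~\ref{lema} are supported away from $a_\psi$ and fix it. Therefore $a_\psi\cup\pa h_1(\fT)\sim a_\psi\cup\pa\fT'=G_\psi$ as marked spatial graphs, so $G\sim G_\psi$. Finally, since the isotopies above carry $a$ onto $a_\psi$ respecting the marking, Corollary~\ref{taua} with $\alpha=a$ gives $\tau_a(G)=\tau_{a_\psi}(G_\psi)$, which is the asserted isotopy invariance.

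The step I expect to be the real obstacle is the localisation carried out in the second paragraph: enhancing Theorem~\ref{crit} so that it keeps track of the extra closed orbit $a$, that is, verifying that the disc, annulus and M\"{o}bius surgeries of Lemmas~\ref{lemd} and~\ref{lema} can be performed inside an isolating handlebody of the saddle set and so never disturb the attractor --- together with its companion fact, needed merely for $G$ to be well defined, that two templates of a given simple Smale flow sharing the same homeomorphism type have isotopic boundaries. Everything else is transport of structure on top of results already in hand.
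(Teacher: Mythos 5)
Your first paragraph is essentially the paper's own proof: convert the flow isotopy into an ambient isotopy $\{h_\la\}$ with $h_1\circ\phi_t=\psi_t$, observe that $h_\la\circ\phi_t$ stays a simple Smale flow so $h_1(a)$ is the attractor of $\psi_t$, invoke Proposition~\ref{isot} to see $h_1(\fT)$ is a template of $\psi_t$, note $h_1(\pa\fT)=\pa h_1(\fT)$, and conclude $G\sim h_1(G)$ as spatial graphs, then finish with Corollary~\ref{taua}. The paper stops there: it tacitly takes $h_1(\fT)$ as \emph{the} template of the isotoped flow and never raises the question of whether $G$ depends on the choice of template. Your second paragraph goes beyond the paper in trying to identify $a_\psi\cup\pa h_1(\fT)$ with $a_\psi\cup\pa\fT'$ for an arbitrary template $\fT'$ of $\psi_t$ of the same homeomorphism type; this is a legitimate well-definedness concern, but your treatment of it is not a proof --- you explicitly ``grant'' that $\pa h_1(\fT)$ and $\pa\fT'$ are isotopic, which is precisely the fact that would need to be established (and which Theorem~\ref{crit} takes as a hypothesis rather than delivering as a conclusion). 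You flag this honestly as the remaining obstacle, so there is no hidden error, but be aware that this part is an unfinished strengthening of the theorem rather than something required to match the paper's argument; for the statement as the paper reads and proves it, your first paragraph already suffices.
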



\begin{proof}
Let
$\{ h_\la \}_{\la \in I}$
be an ambient isotopy. Due to isotopy preserving the chain-recurrent set decomposition all the time and $h_0\circ \phi_t =\phi_t$,
$\{ h_\la \circ \phi_t\}_{\la \in I}$
is a family of SSFs. Therefore $h_1(a)$ is the closed orbit attractor of $h_1\circ \phi_t$, and it is isotopic to $a$. By Proposition~\ref{isog}, $\pa \fT$ is isotopic to $\pa h_1(\fT)$. Since all the linking relationships are preserving under ambient isotopy,
$$h_1(G) = h_1(a)\cup h_1(\pa \fT) = h_1(a)\cup \pa h_1(\fT) \sim a\cup \pa \fT = G$$
as spatial graphs.

By Corollary~\ref{taua}, $\tau_a (G)$ is also invariant under $\{ h_\la \}_{\la \in I}$.
\end{proof}


As for Proposition~\ref{isog}, we can ask the inverse questions of Theorem~\ref{lemg} that

(1) wether or not the spatial graph $G= a\cup \pa \fT$ is a complete isotopic invariant of SSFs on $S^3$; and

(2) wether or not the generalized Kauffman's invariant $\tau_a (G)$ is complete.


For the first question, we give a general discussion on the structure of SSFs \cite{Su, Y}, which is fundamental for the classification problem. Let $\phi_t$ be a SSF on a $3$-manifold $M$ with attractor $a$, repeller $r$, and saddle set $\Om$.

For the saddle set $\Om$, choose a template $\fT$. As a topological space, $\fT$ is a connected compact branched $2$-manifold. The thicken template $\bar{\fT}$ is an isolated neighborhood of $\Om$, which is generally a knotted handlebody embedded in $M$. Assume the genus of $\bar{\fT}$ is $g$, then $\pa \bar{\fT}$ is a connected orientable closed surface of genus $g$. It is composed of two compact subsurfaces- the exit set $\pa_- \bar{\fT}$ and the entrance set
$\pa_+ \bar{\fT}$, which are in fact homeomorphic
$$\pa_- \bar{\fT} \cong \pa_+ \bar{\fT}$$
and intersect at a finite number of disjoint circles as their common boundary. That is,
$$\pa (\pa_- \bar{\fT})= \pa (\pa_+ \bar{\fT})$$
is a disjoint union of finite circles. The core of the exit set $\pa_- \bar{\fT}$ is exactly $\pa \fT$, the boundary of the template.

Let $A$ and $R$ be the tubular neighborhoods of the attractor $a$ and the repeller $r$ in the underlying $3$-manifold $M$ respectively. Since $a$ and $r$ are generally knotted, $A$ and $R$ are knotted solid tori in $M$ with knotted tori $\pa A$ and $\pa R$ as their boundaries.

The original flow $(M, \phi_t)$ can be reconstructed by attaching $A, \bar{\fT}$ and $R$ along their boundaries in the way that gluing $\pa_- \bar{\fT}$ to $\pa A$, $\pa_+ \bar{\fT}$ to $\pa R$, and the remainder part of $\pa A$ to the remainder part of $\pa R$. Both the manifold and flow obtained from the attaching can be smooth by a modification such as in \cite{M}. Let
$$N\defeq A\cup \bar{\fT}$$
be the result of attaching $\bar{\fT}$ to $A$.


The following theorem gives a partial answer of the first question in the case of SSFs on $3$-sphere. It essentially relies on Theorem~\ref{crit} and the fact that the knot complement in $S^3$ is a complete invariant.


\begin{theorem} \label{flowg}
Let $\phi_t$ and $\psi_t$ be two simple Smale flows on $S^3$ with attractors $a$ and $a'$ respectively. Suppose that $\phi_t$ and $\psi_t$ restricted on their saddle sets are topologically equivalent. Then they have topologically equivalent templates, denoted by $\fT$ and $\fT'$ respectively. Moreover, if
$G= a\cup \pa \fT$ and $G'=a' \cup \pa \fT'$ are isotopic spatial graphs, then $\phi_t$ and $\psi_t$ are isotopic flows, after reversing the directions of the attractor or the repeller if necessary.
\end{theorem}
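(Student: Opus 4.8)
The plan is to conjugate $\phi_t$ by a finite composition of ambient isotopies of $S^3$ until it coincides, up to the flow directions along its two closed orbits, with $\psi_t$; since $h_1\circ\phi_t$ is always isotopic to $\phi_t$, this gives $\phi_t\sim\psi_t$.

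First I would exploit $G\sim G'$. By the Isotopy Extension Theorem the spatial-graph isotopy extends to an ambient isotopy $\{h_\la\}_{\la\in I}$ of $S^3$ with $h_1(G)=G'$, and an isotopy of graphs respects components and incidence, so $h_1(a)=a'$ and $h_1(\pa\fT)=\pa\fT'$. Replacing $\phi_t$ by $h_1\circ\phi_t$ (which by Proposition~\ref{isot} is again a simple Smale flow, with attractor $h_1(a)$ and template $h_1(\fT)$) we may assume $a=a'$ and $\pa\fT=\pa\fT'$, so that $G=G'$ as a subset of $S^3$. The reason for folding $a$ into the graph is exactly that one ambient isotopy now matches the attractor and the template boundary simultaneously. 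As the restrictions to the saddle sets are topologically equivalent, the templates $\fT$ and $\fT'$ are topologically equivalent and they now share a boundary.

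By Theorem~\ref{crit} the templates $\fT$ and $\fT'$ are ambient isotopic; since $a$ is disjoint from the compact branched surfaces $\fT\cup\fT'$ (distinct basic sets), this can be carried out inside $S^3\setminus a$, which is irreducible and aspherical, so that the Isotopy Extension, Schoenflies and Alexander sphere theorems used in the proof of Theorem~\ref{crit} remain valid there. After conjugating $\phi_t$ by the resulting ambient isotopy, which fixes a tubular neighbourhood of $a$, we have $\fT=\fT'$ with matching semiflows, hence $\bar{\fT}=\bar{\fT}'$; together with $a=a'$ the three reconstruction pieces glue compatibly, so $N=A\cup\bar{\fT}$ and $N'=A'\cup\bar{\fT}'$ agree up to an ambient isotopy — here one uses that $N$ is the exterior of the repeller closed orbit, so that the knot complement being a complete invariant in $S^3$ promotes the evident abstract homeomorphism $N\cong N'$ to an ambient isotopy of $S^3$ carrying $R\defeq\overline{S^3\setminus N}$ onto $\overline{S^3\setminus N'}$. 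Then $R$ is a single repeller solid torus on which $\phi_t$ and $\psi_t$ both induce a standard repelling flow, so they agree up to an isotopy rel $\pa R$ and possibly a reversal along the core. Reassembling the flow from $A$, $\bar{\fT}$ and $R$ as in the structure discussion above, and smoothing, yields $\phi_t\sim\psi_t$ after reversing, if necessary, the flow direction along the attractor or the repeller — an unavoidable ambiguity, since $G$ and the attaching data see the closed orbits only as unoriented curves.

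The main obstacle is the passage through Theorem~\ref{crit} while fixing $a$: its proof uses Alexander's theorem for tori as well as for spheres (via Lemma~\ref{lema}), but in $S^3\setminus a$ a torus need not bound a solid torus. One must check that the tori arising when matching the closed bands of the templates are compressible in $S^3\setminus a$ — plausible, since the solid tori across which one pushes are swept out by the semiflow from the branch lines and hence can be confined to a regular neighbourhood of $\fT\cup\fT'$ missing $a$ — or else localize every intermediate ball and solid torus in such a neighbourhood by careful innermost-disc bookkeeping. The remaining ingredients, the smoothing in the reconstruction and the direction reversals along $a$ and $r$, are routine by comparison.
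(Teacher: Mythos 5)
Your proposal follows essentially the same route as the paper: extend the graph isotopy $G\sim G'$ to an ambient isotopy matching $a$ with $a'$ and $\pa \fT$ with $\pa \fT'$, invoke the construction of Theorem~\ref{crit} to match the templates while keeping the attractor fixed, and then use the Gordon--Luecke complement theorem to handle the repeller, reversing the orientations of the closed orbits at the end if necessary. The subtlety you flag --- that the innermost-disc and Alexander-torus isotopies inside Theorem~\ref{crit} must be localized away from $a$ --- is a genuine point, which the paper addresses only by asserting that the template-matching isotopy has compact support near $f_1(\fT)$ and hence can be required to fix $f_1(a)=a'$.
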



\begin{proof}
Since $\phi_t$ and $\psi_t$ restricted on their saddle sets are topologically equivalent, from the constructing procedure of templates following \cite{BW2, GHS}, they have topologically equivalent templates clearly.

Let $\fT$ and $\fT'$ be two topologically equivalent templates of $\phi_t$ and $\psi_t$ respectively. By the above reconstruction of a SSF, it is sufficient to construct an ambient isotopy preserving the isolated neighborhoods decomposition
$$S^3= A\cup \bar{\fT}\cup R$$
of $\phi_t$ for all time and connecting the two flows.

If $G= a\cup \pa \fT$ and $G'=a' \cup \pa \fT'$ are isotopic spatial graphs, there is an ambient isotopy $\{ f_\la \}_{\la \in I}$ sending $G$ to $G'$. Then for $f_1(\fT)$ and $\fT'$, $\pa f_1(\fT) =\pa \fT'$, one can construct an isotopy $\{ g_\mu \}_{\mu \in I}$ of $S^3$ sending $f_1(\fT)$ to $\fT'$ and keeping
$\pa f_1(\fT) =\pa \fT'$
as in the proof of Theorem~\ref{crit}. The isotopy $\{ g_\mu \}_{\mu \in I}$ has compact support near $f_1(\fT)$ thus can be required to fix $f_1(a) =a'$. Gluing $\{ f_\la \}_{\la \in I}$ and $\{ g_\mu \}_{\mu \in I}$ together yields an ambient isotopy $\{ h_\nu \}_{\nu \in I}$ satisfying that
$$h_1(G) =G', \quad h_1(\fT) =\fT'.$$
Then $\{ h_\nu \}_{\nu \in I}$ sends the thicken template $\bar{\fT}$ into $\bar{\fT'}$, as well as the tubular neighborhood $A$ into a tubular neighborhood $A'$, and preserves all the linking relationships including for bands of the template and for the attractor with the template. Therefore, the complements of the repeller $r$ and $r'$ are corresponding under $\{ h_\nu \}_{\nu \in I}$. Since the complement is a complete invariant of knots \cite{GL}, the isolated neighborhoods decomposition is invariant by $\{ h_\nu \}_{\nu \in I}$.

The argument leaves out the direction of the closed orbit attractor and repeller, \ie they are under consideration only as point-sets. Nevertheless, if necessary, we can reverse the direction of $a$ or $r$ such that $h_1\circ \phi_t =\psi_t$.
\end{proof}


For the second question that wether or not the generalized Kauffman's invariant $\tau_a (G)$ is a complete invariant for SSFs on $S^3$, the answer seems to be no. A reason is that the Kauffman's invariant is not powerful enough to distinguish spatial graphs. In fact, even for $\theta$-curves, non-isotopic spatial graphs may have the same Kauffman's invariant \cite{Zh}. Thus it is quite a question.


\begin{question}
For a simple Smale flow on $S^3$ with attractor $a$ and a template $\fT$, let $G= a\cup \pa \fT$. Suppose that the homeomorphism type of $\fT$ is fixed. What is the condition of $\fT$ such that the generalized Kauffman's invariant $\tau_a (G)$ is a complete isotopic invariant of the flow?
\end{question}


\begin{remark}
The invariant $\tau_a (G)$ is complete for SSFs on $S^3$ with Lorenz-like templates \cite{Su, Y} or the universal template and its analogues \cite{HS, AS}, up to the disc sign.
\end{remark}




\end{document}